\documentclass[11pt]{amsart}
\usepackage[a4paper,margin=2.5cm]{geometry}

\usepackage[english]{babel}
\usepackage{graphicx,amsmath,amssymb,color, enumerate}
\usepackage{tikz}
\usetikzlibrary{arrows.meta,calc,angles,quotes}
\numberwithin{equation}{section}
\usepackage{enumerate}
\usepackage{comment}
\usepackage{xcolor}
\definecolor{brickred}{rgb}{0.8, 0.25, 0.33}
\definecolor{blue(ryb)}{rgb}{0.01, 0.28, 1.0}
\definecolor{brandeisblue}{rgb}{0.0, 0.44, 1.0}
\definecolor{ceruleanblue}{rgb}{0.16, 0.32, 0.75}
\definecolor{cobalt}{rgb}{0.0, 0.28, 0.67}
\definecolor{coolblack}{rgb}{0.0, 0.18, 0.39}
\definecolor{darkblue}{rgb}{0.0, 0.0, 0.55}
\usepackage[hidelinks]{hyperref}

\usepackage{placeins}
\usepackage{needspace}

\hypersetup{
	colorlinks,
	citecolor=darkblue,
	filecolor=black,
	linkcolor=darkblue,
	urlcolor=black
}

\usepackage{comment}

\newtheorem{theorem}{Theorem}[section]
\newtheorem{lemma}[theorem]{Lemma}

\newtheorem{corollary}[theorem]{Corollary}

\theoremstyle{remark}

\newtheorem{remark}[theorem]{Remark}

\newtheorem{definition}[theorem]{Definition\rm}
\newtheorem{example}{\it Example\/}

\newcommand{\mn}{\medskip\noindent}

\newcommand{\Id}{\mathsf{Id}}

\newcommand{\diff}{\,\mathrm{d}}
\newcommand{\ii}{\,\mathrm{i}}

\title[Inverse semiclassical scattering at fixed energy]
{Inverse Semiclassical Scattering at Fixed Energy}

\author[F. Nicoleau]{Fran\c{c}ois Nicoleau}
\address[F. Nicoleau]{Laboratoire de Math\'ematiques Jean Leray, CNRS, Nantes Universit\'e,
44000 Nantes, France.}
\email{francois.nicoleau@univ-nantes.fr}

\subjclass[2020]{35P25, 35Q40, 81U05, 81Q20.}
\keywords{Inverse scattering, semiclassical analysis, radial potentials, scattering amplitude, deflection function, Abel transform.}

\begin{document}
	
	\maketitle

\begin{center}
	\emph{Dedicated to Didier Robert, for his major contributions to
		semiclassical analysis, \\ scattering theory, and coherent states.}
\end{center}

\begin{abstract}
	We investigate inverse scattering at a fixed energy for semiclassical
	Schrödinger operators with smooth potentials. For compactly supported
	potentials satisfying a natural virial condition, we prove that
	$
	\liminf_{h\to0}
	\|S_1(\lambda,h)-S_2(\lambda,h)\|<\sqrt{2}
	$
	implies $V_1=V_2$, provided the fixed energy $\lambda$ lies above both
	potentials. We then consider radial short-range repulsive potentials.
	Under a monotonicity assumption on the radial force, we show that the
	semiclassical differential cross section at a single fixed energy, up
	to $o(1)$ as $h\to0$, determines the potential throughout the
	classically accessible region. Finally, we obtain an analogous rigidity
	result for nontrapping compactly supported perturbations of the Euclidean
	metric under a strict convexity assumption.
\end{abstract}

\section{Introduction}\label{sec:intro}

The purpose of this paper is to study the inverse problem for
semiclassical scattering at a fixed energy. More precisely, we investigate
to what extent the scattering data in the semiclassical limit determine
the underlying potential. Let
\begin{equation}\label{eq:Ph}
	H(h)=-\frac{h^2}{2}\Delta+V(x),\qquad 0<h\leq1,
\end{equation}
be the semiclassical Schr\"odinger operator in $L^2(\mathbb R^n)$,
$n\geq3$. Throughout the paper, $V$ is real-valued and smooth. We shall
work with short-range potentials satisfying, for some $\mu>1$,
\begin{equation}\label{eq:short-range}
	|\partial_x^\alpha V(x)|
	\leq C_\alpha\langle x\rangle^{-\mu-|\alpha|},
	\qquad \alpha\in\mathbb N^n,
	\qquad
	\langle x\rangle=(1+|x|^2)^{1/2}.
\end{equation}
For such potentials, trajectories escaping to infinity have free
asymptotics, while trapped trajectories may still occur at a given
energy.

\mn
We denote by
\begin{equation}\label{eq:H0}
	H_0(h)=-\frac{h^2}{2}\Delta
\end{equation}
the free Hamiltonian associated with $H(h)$, and by
\begin{equation}\label{eq:resolvent}
	R(z;h)=(H(h)-z)^{-1},
	\qquad z\in\mathbb C\setminus\sigma(H(h)),
\end{equation}
the resolvent of $H(h)$. Let $\mathcal F_h$ denote the semiclassical Fourier transform, defined for
$f\in\mathcal S(\mathbb R^n)$ by
\[
(\mathcal F_h f)(\xi)
=
(2\pi h)^{-n/2}
\int_{\mathbb R^n}e^{-ix\cdot\xi/h}f(x)\,dx.
\]
For \(\lambda>0\), the Fourier restriction operator at energy \(\lambda\) is given by
\[
F_0(\lambda,h)f(\omega)
=
(2\lambda)^{(n-2)/4}
(\mathcal F_h f)(\sqrt{2\lambda}\,\omega),
\qquad \omega\in\mathbb S^{n-1}.
\]
The scattering matrix
\[
S(\lambda,h):L^2(\mathbb S^{n-1})
\longrightarrow L^2(\mathbb S^{n-1})
\]
is unitary and can be written as
\begin{equation}\label{eq:S-T}
	S(\lambda,h)=\Id-2\pi\ii\,T(\lambda,h),
\end{equation}
where
\begin{equation}\label{eq:T-matrix}
	T(\lambda,h)
	=
	F_0(\lambda,h)
	\bigl(V-VR(\lambda+\ii0;h)V\bigr)
	F_0(\lambda,h)^*.
\end{equation}
Here
\[
R(\lambda+\ii0;h)
=
s-\lim_{\varepsilon\downarrow0}\ 
(H(h)-\lambda-\ii\varepsilon)^{-1}
\]
denotes the outgoing boundary value of the resolvent, where the strong
limit is taken in
\[
\mathcal B\bigl(L^2_s(\mathbb R^n),L^2_{-s}(\mathbb R^n)\bigr),
\qquad s>\frac12,
\]
with
\begin{equation}
L^2_s(\mathbb R^n)
=
\bigl\{u\in L^2_{\mathrm{loc}}(\mathbb R^n);
\ \langle x\rangle^s u\in L^2(\mathbb R^n)\bigr\},
\qquad
\langle x\rangle=(1+|x|^2)^{1/2}.
\end{equation}

\mn 
Away from the diagonal, $T(\lambda,h)$ has a smooth kernel, denoted by
\[
T(\theta,\omega;\lambda,h),
\qquad
\theta,\omega\in\mathbb S^{n-1},
\quad \theta\neq\omega.
\]
The corresponding scattering amplitude is defined by Kuroda's formula
\begin{equation}\label{eq:f-T}
	f(\omega\to\theta;\lambda,h)
	=
	c(\lambda,h)\,T(\theta,\omega;\lambda,h),
\end{equation}
where
\begin{equation}\label{eq:c-scattering}
	c(\lambda,h)
	=
	-(2\pi)^{(n+1)/2}
	(2\lambda)^{-(n-1)/4}
	h^{(n-1)/2}
	\exp\left(-\ii\frac{(n-3)\pi}{4}\right).
\end{equation}
For the standard scattering theory recalled above, we refer to
Isozaki--Kitada \cite{IsozakiKitada1,IsozakiKitada2},
Reed--Simon \cite{ReedSimon3}, and Yafaev \cite{Yafaev}.

\mn 
Our main inverse problem is to determine to what extent the condition
\begin{equation}\label{eq:intro-scattering-close}
	\liminf_{h\to0}
	\|S_1(\lambda,h)-S_2(\lambda,h)\|_
	{\mathcal B(L^2(\mathbb S^{n-1}))}
	<\sqrt{2},
\end{equation}
at a fixed energy $\lambda>0$, determines the potential.

\mn 
There is a natural limitation to such a fixed-energy inverse problem.
In \cite{Nicoleau}, it was proved that, for two short-range potentials
$V_1$ and $V_2$, at a fixed non-trapping energy $\lambda>0$ (defined below),
\begin{equation}\label{eq:old-direct-result}
	(\lambda-V_1)_+=(\lambda-V_2)_+
	\quad\Longrightarrow\quad
	S_1(\lambda,h)-S_2(\lambda,h)
	=\mathcal O(h^\infty)
\end{equation}
in the operator norm of $\mathcal B(L^2(\mathbb S^{n-1}))$ as $h\to0$,
where $(a)_+:=\max\{a,0\}$.
Thus, at a fixed energy, one cannot in general expect the semiclassical
scattering data to determine the potential in the classically forbidden
region $\{V\geq\lambda\}$. The natural inverse problem is therefore to
recover $V$ in the classically allowed region $\{V<\lambda\}$, or,
equivalently, to recover the function $(\lambda-V)_+$.

\mn 
The semiclassical direct problem has a precise classical interpretation.
Semiclassical asymptotics for scattering amplitudes were first obtained
by Protas \cite{Protas} and Vainberg \cite{Vainberg} for compactly
supported potentials. Yajima \cite{Yajima} subsequently treated non-compactly
supported short-range potentials, obtaining an asymptotic expansion in
an $L^2$ sense with respect to the energy and angular variables.
Robert and Tamura \cite{RobertTamura} then obtained, for short-range
potentials, a precise fixed-energy description at a non-trapping energy,
expressing the scattering amplitude, for regular outgoing directions, as
a finite sum of contributions associated with the underlying classical
scattering trajectories. Further microlocal descriptions of the semiclassical scattering matrix
were obtained by Alexandrova \cite{Alexandrova}, as we shall discuss
in more detail below.

\mn 
We recall the classical objects entering the semiclassical scattering
asymptotics, using a notation adapted to the present paper.  For the standard semiclassical framework and its relation with the
underlying classical Hamiltonian dynamics, we refer, for instance, to
Robert~\cite{Robert}. The
Hamiltonian associated with~\eqref{eq:Ph} is
\begin{equation}\label{eq:hamiltonian}
	p(x,\xi)=\frac{|\xi|^2}{2}+V(x),
\end{equation}
and the corresponding classical trajectories
$t\mapsto(q_{\rm cl}(t),p_{\rm cl}(t))$ are the integral curves of the
Hamiltonian vector field, that is,
\begin{equation}\label{eq:Hamilton-system}
	\dot q_{\rm cl}(t)=p_{\rm cl}(t),\qquad
	\dot p_{\rm cl}(t)=-\nabla V(q_{\rm cl}(t)).
\end{equation}

\mn
At the energy $\lambda>0$, the energy surface is
\begin{equation}\label{eq:energy-surface}
	\Sigma_\lambda
	=
	\left\{(x,\xi)\in T^*\mathbb R^n;
	\frac{|\xi|^2}{2}+V(x)=\lambda\right\}.
\end{equation}
We recall that $\lambda>0$ is a non-trapping energy if, for every
$R>1$ sufficiently large, there exists $T=T(R)>0$ such that
\begin{equation}\label{eq:nontrapping}
	|q_{\rm cl}(t;y,\eta)|>R,
	\qquad |t|>T,
\end{equation}
whenever $|y|<R$ and $(y,\eta)\in\Sigma_\lambda$, where
$t\mapsto(q_{\rm cl}(t;y,\eta),p_{\rm cl}(t;y,\eta))$ denotes the
solution of~\eqref{eq:Hamilton-system} with initial data $(y,\eta)$ at $t=0$.

\mn
Under the non-trapping assumption and~\eqref{eq:short-range}, every
classical trajectory $(q_{\rm cl}(t),p_{\rm cl}(t))$ on the energy surface $\Sigma_\lambda$ is a scattering trajectory and admits
asymptotic data $(r_\pm,v_\pm)$ such that
\begin{equation}\label{eq:free-asymptotics}
	q_{\rm cl}(t)=v_\pm t+r_\pm+o(1),\qquad
	p_{\rm cl}(t)=v_\pm+o(1),
	\qquad t\to\pm\infty.
\end{equation}
Since the energy is conserved and $V(q_{\rm cl}(t))\to0$ as
$t\to\pm\infty$, one has
$
|v_\pm|=\sqrt{2\lambda}.
$
The map
\begin{equation}\label{eq:classical-scattering-transformation}
	\mathcal S_{\rm cl}:(r_-,v_-)\longmapsto(r_+,v_+)
\end{equation}
is the classical scattering transformation.

\mn 
For later use, we describe the classical scattering transformation in
the coordinates naturally associated with the semiclassical scattering
matrix. Fix an incoming direction $\omega\in\mathbb S^{n-1}$ and set
\begin{equation}\label{eq:impact-plane}
	\Lambda_\omega
	=
	\{z\in\mathbb R^n;\ z\cdot\omega=0\}.
\end{equation}
For $z\in\Lambda_\omega$, let
\[
\bigl(q_\infty(t;z,\lambda),p_\infty(t;z,\lambda)\bigr)
\]
be the scattering trajectory satisfying
\begin{equation}\label{eq:incoming-asymptotics}
	q_\infty(t;z,\lambda)
	=
	\sqrt{2\lambda}\,\omega t+z+o(1),
	\qquad
	p_\infty(t;z,\lambda)
	=
	\sqrt{2\lambda}\,\omega+o(1),
	\qquad t\to-\infty.
\end{equation}
As $t\to+\infty$, there exist smooth functions
$\xi_\infty(z;\lambda)\in\mathbb S^{n-1}$ and
$r_\infty(z;\lambda)\in\mathbb R^n$ such that
\begin{equation}\label{eq:outgoing-asymptotics}
	\begin{split}
		q_\infty(t;z,\lambda)
		&=
		\sqrt{2\lambda}\,
		\xi_\infty(z;\lambda)t+r_\infty(z;\lambda)+o(1),\\
		p_\infty(t;z,\lambda)
		&=
		\sqrt{2\lambda}\,
		\xi_\infty(z;\lambda)+o(1).
	\end{split}
	\qquad t\to+\infty.
\end{equation}
For $\theta\in\mathbb S^{n-1}$, let
\[
\Pi_\theta x=x-(x\cdot\theta)\theta
\]
denote the orthogonal projection onto $\theta^\perp$. The classical
scattering map at energy $\lambda$ is then given by
\begin{equation}\label{eq:classical-scattering-map}
	\kappa_\lambda(\omega,z)
	=
	\left(
	\xi_\infty(z;\lambda),
	\Pi_{\xi_\infty(z;\lambda)}r_\infty(z;\lambda)
	\right).
\end{equation}

\mn
Figure~\ref{fig:classical-scattering} summarizes the geometry. For a
genuinely short-range potential, the interaction is not compactly
supported; the circle only represents the region where it is significant.
The parameter $z$ labels the incoming line, while
$\xi_\infty(z;\lambda)$ is the outgoing direction.

\begin{figure}[ht]
	\centering
	\begin{tikzpicture}[scale=0.7,>=Latex]
		\draw[->] (-5,0) -- (-1.3,0);
		\draw[->] (1.0,0.7) -- (4.7,2.6);
		\draw[dashed] (-5,-1.1) -- (-1.3,-1.1);
		\draw[dashed] (1.0,-0.35) -- (4.7,1.55);
		\draw[thick] (-1.3,0) .. controls (-0.2,0.05) and (0.0,0.35) .. (1.0,0.7);
		\draw (0,0) circle (0.85);
		\node at (0,-0.05) {$\operatorname{supp}_{\rm eff}V$};
		\draw[<->] (-4.2,-1.1) -- (-4.2,0);
		\node[left] at (-4.2,-0.55) {$z$};
		\node[above] at (-3.5,0) {$\sqrt{2\lambda}\,\omega$};
		\node[above,rotate=27] at (3.2,1.85)
		{$\sqrt{2\lambda}\,\xi_\infty(z;\lambda)$};
		\node[below] at (3.0,0.65) {outgoing asymptote};
	\end{tikzpicture}
	\caption{A classical scattering trajectory at the fixed energy $\lambda$.}

	\label{fig:classical-scattering}
\end{figure}

\Needspace{12\baselineskip}

\mn
We first consider compactly supported potentials. At a fixed
non-trapping energy, the condition
$\liminf_{h\to0}\|S_1(\lambda,h)-S_2(\lambda,h)\|_
{\mathcal B(L^2(\mathbb S^{n-1}))}<\sqrt{2}$
already implies equality of the classical scattering maps.
Combined with a classical rigidity result, this yields our first
uniqueness theorem.

\begin{theorem}\label{thm:compact}
	Let $V_1,V_2\in C_c^\infty(\mathbb R^n)$, $n\geq3$, and let
	$\lambda>0$ satisfy
	\begin{equation}\label{eq:energy-above}
		\lambda>\sup_{\mathbb R^n}V_j,
		\qquad j=1,2.
	\end{equation}
Assume moreover the virial condition
\begin{equation}\label{eq:virial-intro}
	2(\lambda-V_j(x))-x\cdot\nabla V_j(x)>0,
	\qquad x\in\mathbb R^n,
	\qquad j=1,2.
\end{equation}
	If
	\begin{equation} \label{eq:scattering-equality-compact}
		\liminf_{h\to0}
		\|S_1(\lambda,h)-S_2(\lambda,h)\|_
		{\mathcal B(L^2(\mathbb S^{n-1}))}
		<\sqrt{2},
	\end{equation}
	then
	\begin{equation}\label{eq:compact-uniqueness}
		V_1=V_2.
	\end{equation}
\end{theorem}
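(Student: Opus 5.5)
The proof has two steps. First, the operator-norm hypothesis \eqref{eq:scattering-equality-compact} forces equality of the classical scattering maps $\kappa_\lambda^{(1)}=\kappa_\lambda^{(2)}$. Second, a classical rigidity argument recovers the potential from its scattering map at a single energy.

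\emph{Step 1.} By the virial condition \eqref{eq:virial-intro}, the function $|q_{\rm cl}(t)|^2$ satisfies
\[
\frac{d^2}{dt^2}|q_{\rm cl}|^2=2|p_{\rm cl}|^2-2q_{\rm cl}\cdot\nabla V(q_{\rm cl})=2\bigl(2(\lambda-V)-x\cdot\nabla V\bigr)>0
\]
on $\Sigma_\lambda$. Hence $\lambda$ is non-trapping for both $V_1$ and $V_2$, and $t\mapsto|q_{\rm cl}(t)|$ has a single minimum along each trajectory. By the Robert--Tamura/Alexandrova description recalled in the introduction, each $S_j(\lambda,h)$ is a semiclassical Fourier integral operator associated with the canonical graph of $\kappa_\lambda^{(j)}$ on $T^*\mathbb S^{n-1}$, modulo $\mathcal O(h^\infty)$, with elliptic unitary principal symbol.

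Suppose that $\kappa^{(1)}_\lambda(\omega_0,z_0)\neq\kappa^{(2)}_\lambda(\omega_0,z_0)$ at some point. Choose a coherent state (a WKB wave packet) $u_h$ on $\mathbb S^{n-1}$, of norm $1$, microlocalized at $(\omega_0,z_0)$. Then $S_1u_h$ and $S_2u_h$ are microlocalized at distinct points, so $\langle S_1u_h,S_2u_h\rangle\to0$, and therefore
\[
\|(S_1-S_2)u_h\|^2=2-2\,\mathrm{Re}\langle S_1u_h,S_2u_h\rangle\to2 .
\]
This holds along every sequence $h\to0$, so $\liminf_{h\to 0}\|S_1-S_2\|\geq\sqrt2$, contradicting the hypothesis. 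Hence the two scattering maps agree.

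One caveat concerns points $(\omega,z)$ with $|z|$ larger than the support radius: there the trajectory is free and $\kappa_\lambda$ is the identity for both potentials. For such points the FIO description must also include the forward (diagonal) part, but this causes no difficulty because $\kappa^{(1)}_\lambda$ and $\kappa^{(2)}_\lambda$ trivially agree there.

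\emph{Step 2.} Knowing $\kappa_\lambda$ means knowing the outgoing asymptote of every incoming line. For compactly supported potentials this determines the time delay (the lens data) of the Jacobi metric $g_j=2(\lambda-V_j)|dx|^2$ on a large ball $B$ containing both supports. Indeed, equality of the outgoing asymptotes gives equality of the exit points and exit directions on $\partial B$. The travel times then agree as well, because the shift $r_\infty$ along the outgoing direction can be recovered from the generating function of the canonical map, or alternatively from the Hamilton--Jacobi identity relating the time delay to the action.

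Under \eqref{eq:energy-above} the metrics $g_j$ are smooth Riemannian metrics. The virial condition is exactly the statement that the spheres $\{|x|=r\}$ are strictly convex for $g_j$, i.e.\ the radial function $|x|^2$ is strictly convex along $g_j$-geodesics. This gives a foliation by strictly convex hypersurfaces. In dimension $n\geq3$, the Stefanov--Uhlmann--Vasy local-to-global boundary rigidity theorem for conformal metrics then shows that equal lens data force $g_1=g_2$, that is $V_1=V_2$. For conformal deformations, the Mukhometov / Croke-type uniqueness for simple conformal metrics sharing the same boundary distance function would suffice instead, provided simplicity (no conjugate points) can be extracted from convexity of $|x|^2$. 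The Stefanov--Uhlmann--Vasy route avoids needing that.

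I expect the main obstacle to be Step 1. It needs a uniform microlocal description of $S_j(\lambda,h)$ that covers both the diagonal (free) region and the scattered region, together with a careful coherent-state argument showing that the inner product tends to $0$ whenever the images differ. Making the near-diagonal identity part compatible with the FIO structure is the delicate point.
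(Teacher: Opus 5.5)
Your overall architecture matches the paper's. You use the virial condition to get non-trapping, and coherent-state orthogonality plus unitarity to get $\kappa_{1,\lambda}=\kappa_{2,\lambda}$. You then pass to the boundary scattering relation on a large ball, use virial $\Leftrightarrow$ Herglotz $\Leftrightarrow$ a strictly convex spherical foliation for the Jacobi metrics, and conclude with Stefanov--Uhlmann--Vasy.

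\textbf{Gap in Step~1.} The genuine gap is the one you flag yourself, and your caveat does not close it. Alexandrova's theorem gives the FIO structure of $S(\lambda,h)$ only microlocally away from the diagonal. The bad points are not only those with $|z|$ beyond the support radius.
\begin{itemize}
\item A priori, the open set $\{\kappa_{1,\lambda}\neq\kappa_{2,\lambda}\}$ could consist of incoming lines that meet $\operatorname{supp}V_1$ but miss $\operatorname{supp}V_2$. There $\kappa_{2,\lambda}(\rho)=\rho$, and you need the microlocal form of $S_2$ at a diagonal point.
\item An interacting trajectory may also leave in its incoming direction.
\end{itemize}
Nothing at this stage lets you move $\rho$ away from such points.

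The paper avoids the problem by using Ingremeau's Gaussian-state theorem instead of Alexandrova's. For compactly supported potentials at a non-trapping energy, that theorem holds for \emph{every} $\rho\in T^*\mathbb S^{n-1}$: $S_j(\lambda,h)\phi_{\rho,h}=e^{i\delta_j/h}\Phi_{j,h}+O(h^M)$, where $\Phi_{j,h}$ is a Gaussian centred at $\kappa_{j,\lambda}(\rho)$ and cut off to an $h^{1/3}$-ball. Orthogonality then follows directly, with no diagonal exclusion (Lemma~\ref{lem:recovery-kappa}). If $\omega_1\neq\omega_2$ the supports are disjoint. If $z_1\neq z_2$, non-stationary phase applies to $e^{i(z_1-z_2)\cdot\widehat x/h}$.

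\textbf{Step~2: travel times.} The travel-time claim is unnecessary, and your justification for it does not work.
\begin{itemize}
\item Only $\Pi_{\xi_\infty}r_\infty$ enters $\kappa_\lambda$. The longitudinal part $r_\infty\cdot\xi_\infty$ is not part of the data and encodes Newtonian time.
\item The Hamilton--Jacobi identity linking that time to the action involves a $\lambda$-derivative, so it needs more than one energy.
\item Lens data for the Jacobi metrics require the $g_j$-lengths $\int|p|^2\,dt$, not Newtonian times. These lengths do follow from the scattering relation, by the first-variation argument of Lemma~\ref{lem:scattering-lengths}, because $g_1=g_2=2\lambda g_0$ near $\partial B$.
\end{itemize}
The paper needs none of this. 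As it notes, the conformal rigidity theorem of Stefanov--Uhlmann--Vasy (2016, Theorem~1.4) under the Herglotz condition requires only $L_1=L_2$.

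\textbf{Step~2: convexity.} The virial condition gives strict convexity of the spheres, namely $\frac{d^2}{ds^2}|x|^2>0$ at tangency points. It does not in general make $|x|^2$ convex along every $g_j$-geodesic, because the reparametrization $ds=|p|^2\,dt$ adds a first-order term. Only the tangency statement is needed.
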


\mn
Let us briefly explain the main ingredients of the proof. A fundamental
result of Alexandrova~\cite{Alexandrova} shows, for short-range potentials
and microlocally away from the diagonal, that the semiclassical scattering
matrix is a semiclassical Fourier integral operator associated with the
classical scattering relation. In particular, her work provides a precise
microlocal link between the quantum scattering matrix and the underlying
classical scattering dynamics.

\mn
Here we use instead a particularly convenient description due to
Ingremeau~\cite{Ingremeau}. For compactly supported potentials at a
non-trapping energy, his result describes explicitly the action of the
scattering matrix on Gaussian states localized at points of
$T^*\mathbb S^{n-1}$. The asymptotic orthogonality of Gaussian states
localized at distinct phase-space points then yields a quantitative
threshold: it is enough that
\begin{equation}
\liminf_{h\to0}
\|S_1(\lambda,h)-S_2(\lambda,h)\|<\sqrt{2}
\end{equation}
to recover the classical scattering map.

\mn 
We now recall the precise form of Ingremeau's result that we shall use.
His result is stated at the fixed energy $1/2$. By a harmless rescaling,
we may reduce our problem at energy $\lambda>0$ to this setting. If $\phi_{\rho,h}$ is a Gaussian state localized near
$\rho\in T^*\mathbb S^{n-1}$, then, for every $N\in\mathbb N$,
\begin{equation}\label{eq:Ingremeau-intro}
	S(\lambda,h)\phi_{\rho,h}
	=
	e^{\ii\delta/h}
	\phi_{\kappa_\lambda(\rho),h}^{(N)}
	+\mathcal O(h^N),
\end{equation}
where $\delta\in\mathbb R$ and
$\phi_{\kappa_\lambda(\rho),h}^{(N)}$ is an outgoing Gaussian state,
with an amplitude admitting a finite semiclassical expansion, localized
near the phase point $\kappa_\lambda(\rho)$.

\mn
The essential point for our purposes is the transport of the center of
the Gaussian state from $\rho$ to $\kappa_\lambda(\rho)$. We show that
this implies
\begin{equation}\label{eq:kappa-recovery-intro}
	\liminf_{h\to0}
	\|S_1(\lambda,h)-S_2(\lambda,h)\|_
	{\mathcal B(L^2(\mathbb S^{n-1}))}
	<\sqrt{2}
	\quad\Longrightarrow\quad
	\kappa_{1,\lambda}=\kappa_{2,\lambda}.
\end{equation}
Indeed, Gaussian states localized at distinct phase-space points are
asymptotically orthogonal. Since the scattering matrices are unitary,
their images of the same normalized incoming Gaussian state would then
have distance asymptotic to $\sqrt{2}$, which yields \eqref{eq:kappa-recovery-intro}.

\mn
It remains to pass from the classical scattering map to the potential.
Following the approach used by Alexandrova \cite{Alexandrova}, we
interpret the classical trajectories in terms of the associated Jacobi
metric. Under \eqref{eq:energy-above}, trajectories at energy $\lambda$,
up to reparametrization, are geodesics of the Jacobi metrics
\begin{equation}\label{eq:Jacobi-intro}
	g_j=2(\lambda-V_j)g_0,
	\qquad j=1,2.
\end{equation}
The virial condition~\eqref{eq:virial-intro} implies that $\lambda$ is
a non-trapping energy and that the Euclidean spheres form a strictly
convex foliation for the Jacobi metrics~\eqref{eq:Jacobi-intro}. The former property allows us to apply Ingremeau's result
\cite{Ingremeau}, while the latter allows us to use the rigidity theorem
of Stefanov--Uhlmann--Vasy \cite{SUV2016} and
conclude that $g_1=g_2$, hence $V_1=V_2$.  We also mention that related fixed-energy scattering rigidity results
for simple MP-systems were obtained by Mu\~noz-Thon~\cite{MunozThon2024}.
In the purely potential case considered here, simplicity amounts to
simplicity of the associated Jacobi metric, and his result yields
uniqueness of the potential when the background metric is fixed.
By contrast, our result does not require the Jacobi metric to be simple;
the simplicity assumption is replaced by the convex foliation condition
provided by the virial hypothesis.

\mn
We next turn to radial short-range potentials. Here the inverse problem
has a different character. In particular, the energy is allowed to lie
below the maximum of the potential and, as explained above, one can
only expect to recover the potential in the classically accessible
region.

\mn
Let
\begin{equation}\label{eq:radial-potentials-intro}
	V_j(x)=v_j(|x|),
	\qquad j=1,2,
\end{equation}
be smooth radial short-range potentials satisfying
\begin{equation}\label{eq:radial-assumptions-intro}
	v_j'(r)<0,\qquad r>0,
	\qquad
	v_j(0)>\lambda.
\end{equation}
The assumption $v_j(0)>\lambda$, together with the strict monotonicity
of $v_j$, ensures the existence of a unique positive radial turning
point $r_{j,\lambda}>0$ defined by
\begin{equation}\label{eq:r-lambda-intro}
	v_j(r_{j,\lambda})=\lambda.
\end{equation}
The regions $r<r_{j,\lambda}$ and $r>r_{j,\lambda}$ are respectively
classically forbidden and classically accessible at the energy
$\lambda$. We shall further assume the monotonicity condition
\begin{equation}\label{eq:radial-monotonicity-intro}
	\bigl(rv_j'(r)\bigr)'\geq0,
	\qquad r>r_{j,\lambda},
	\qquad j=1,2.
\end{equation}
In terms of the radial force $F_j(r)=-v_j'(r)>0$, this means that
$rF_j(r)$ is non-increasing in the classically accessible region.
Notice also that the virial condition is automatic in this region:
\begin{equation}\label{eq:radial-virial-intro}
	2\bigl(\lambda-v_j(r)\bigr)-rv_j'(r)>0,
	\qquad r>r_{j,\lambda}.
\end{equation}

\mn
Since the potentials are radial, the scattering amplitude is invariant
under simultaneous rotations of the incoming and outgoing directions.
More precisely, for every $R\in SO(n)$, the scattering amplitudes satisfy 
\begin{equation}\label{eq:radial-rotation-invariance}
	f_j(R\omega\to R\theta;\lambda,h)
	=
	f_j(\omega\to\theta;\lambda,h),
	\qquad j=1,2.
\end{equation}
The corresponding semiclassical differential cross section is defined by
\begin{equation}\label{eq:semiclassical-cross-section}
	\frac{d\sigma_{j,h}}{d\Omega}
	(\omega,\theta;\lambda)
	=
	|f_j(\omega\to\theta;\lambda,h)|^2.
\end{equation}
By rotational invariance, both the scattering amplitude and the
differential cross section depend only on the scattering angle between
$\omega$ and $\theta$. We may therefore fix an incoming direction
$\omega\in\mathbb S^{n-1}$ without loss of information.

\mn
Our second result shows that, in the radial setting, the differential
cross section alone is sufficient to recover the potential in the
classically accessible region.

\begin{theorem}\label{thm:radial}
	Let $V_j(x)=v_j(|x|)$, $j=1,2$, satisfy the short-range condition
	\eqref{eq:short-range} and assumptions
	\eqref{eq:radial-assumptions-intro}--\eqref{eq:radial-monotonicity-intro}.
	Fix an incoming direction $\omega\in\mathbb S^{n-1}$ and assume that
	\begin{equation}\label{eq:cross-section-equality-intro}
		\frac{d\sigma_{1,h}}{d\Omega}
		(\omega,\theta;\lambda)
		-
		\frac{d\sigma_{2,h}}{d\Omega}
		(\omega,\theta;\lambda)
		\longrightarrow0,
		\qquad h\longrightarrow0,
	\end{equation}
	for every $\theta\in\mathbb S^{n-1}\setminus\{\omega,-\omega\}$.
	Then
	\begin{equation}\label{eq:r-lambda-equality-intro}
		r_{1,\lambda}=r_{2,\lambda}=:r_\lambda,
	\end{equation}
	and
	\begin{equation}\label{eq:radial-uniqueness-intro}
		V_1(x)=V_2(x),
		\qquad |x|\geq r_\lambda.
	\end{equation}
\end{theorem}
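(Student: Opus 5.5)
The plan is to convert the semiclassical cross section into classical data, namely the classical differential cross section, and then invert the classical radial scattering problem through the deflection function and an Abel transform.

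\emph{Step 1: semiclassical limit of the cross section.} For a radial repulsive potential with $v_j'<0$, the impact parameter $b\ge0$ determines the deflection angle
\[
\Theta_j(b)=\pi-2\int_{r_j(b)}^\infty \frac{b\,dr}{r^2\sqrt{1-\frac{v_j(r)}{\lambda}-\frac{b^2}{r^2}}},
\]
where $r_j(b)$ is the largest zero of $1-v_j(r)/\lambda-b^2/r^2$. The condition \eqref{eq:radial-monotonicity-intro} is equivalent to $r\mapsto r^2(1-v_j(r)/\lambda)$ having derivative $2r(1-v_j/\lambda)-r^2v_j'/\lambda$, which is positive by \eqref{eq:radial-virial-intro}. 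Hence $r_j(b)$ is well defined and smooth, $r_j(0)=r_{j,\lambda}$, and the energy is non-trapping.

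I then aim to prove that $\Theta_j$ decreases strictly from $\Theta_j(0^+)=\pi$ (head-on reflection off the turning sphere) to $0$ as $b\to\infty$. This is the standard consequence of $(rF)'\le0$, obtained by rewriting $\Theta_j$ with the substitution $u=b/r$ and differentiating in $b$.

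Once monotonicity holds, each $\theta\neq\pm\omega$ comes from exactly one trajectory family. The Robert--Tamura asymptotics (one non-degenerate classical trajectory, with no interference terms) then give
\[
|f_j(\omega\to\theta;\lambda,h)|^2\to \sigma_j(\Theta)=\frac{b^{n-2}}{(\sin\Theta)^{n-2}\,|\Theta_j'(b)|}\Big|_{b=\Theta_j^{-1}(\Theta)}.
\]
Only the modulus appears, so the Maslov phases play no role. I also need the nondegeneracy $\Theta_j'(b)\neq0$, which should follow from the same convexity computation as the monotonicity.

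\emph{Step 2: recover the deflection function.} The hypothesis \eqref{eq:cross-section-equality-intro} gives $\sigma_1=\sigma_2$ on $(0,\pi)$. Writing $\beta_j=\Theta_j^{-1}$, the identity $\sigma_j\sin^{n-2}\Theta=\beta_j^{n-2}|\beta_j'|$ becomes
\[
-\frac{d}{d\Theta}\,\frac{\beta_j^{n-1}}{n-1}=\sigma_j(\Theta)\sin^{n-2}\Theta .
\]
Since $\beta_j(\pi)=0$, integrating from $\Theta$ to $\pi$ gives $\beta_1=\beta_2$, hence $\Theta_1=\Theta_2$.

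\emph{Step 3: inversion.} Use the Firsov-type Abel inversion. Set $\rho_j(r)=r\sqrt{1-v_j(r)/\lambda}$, which is strictly increasing on $(r_{j,\lambda},\infty)$ and vanishes at $r_{j,\lambda}$. In the variable $\rho$ one has $\Theta(b)=\pi-2\int_b^\infty \frac{b\,d\rho\,(\log r)'(\rho)}{\sqrt{\rho^2-b^2}}$, which is an Abel transform of $\log(r/\rho)$. Inverting it gives
\[
\log\frac{r}{\rho}=\frac1\pi\int_\rho^\infty\frac{\Theta(b)}{\sqrt{b^2-\rho^2}}\,db .
\]
Thus $\Theta$ determines the function $\rho\mapsto r(\rho)$ on $(0,\infty)$. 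Its limit as $\rho\to0$ is $r_{j,\lambda}$, so $r_{1,\lambda}=r_{2,\lambda}$. Moreover $v_j=\lambda(1-\rho^2/r^2)$ along this parametrization, so $v_1=v_2$ on $[r_\lambda,\infty)$.

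\emph{Main obstacle.} The hardest step is Step 1. Two things must be checked there. First, the monotonicity and non-degeneracy of $\Theta_j$ under exactly the hypothesis $(rv')'\ge0$. Second, that the Robert--Tamura asymptotics apply pointwise for every $\theta\ne\pm\omega$, with uniqueness of the trajectory excluding both interference and rainbow and glory singularities.
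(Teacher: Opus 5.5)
Your architecture is the paper's. You get nontrapping from the virial inequality and a single nondegenerate branch, so that Robert--Tamura gives $|f_j|^2\to b^{n-2}/(\sin^{n-2}\Theta\,|\Theta_j'(b)|)$. You then integrate the cross-section ODE from $\Theta$ to $\pi$ using $\beta_j(\pi^-)=0$, apply the Firsov--Abel inversion in $\rho=r\sqrt{1-v/\lambda}$, and read off $r_{j,\lambda}=\lim_{\rho\to0}r_j(\rho)$ and $v_j=\lambda(1-\rho^2/r^2)$. Steps 2 and 3 are fine.

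A minor slip: uniqueness of the turning point comes from \eqref{eq:radial-virial-intro}, which is automatic from $v_j'<0$. Hypothesis \eqref{eq:radial-monotonicity-intro} plays no role there, so your ``equivalent'' sentence is mislabelled.

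The real gap is the monotonicity of $\Theta_j$, which is the only place \eqref{eq:radial-monotonicity-intro} enters. It is not ``standard'' under this hypothesis: the criteria of Pulvirenti--Saffirio--Simonella and Gallagher--Saint-Raymond--Texier are stated under the stronger $(r^2v')'\ge0$. Also, after the substitution $u=b/r$ alone you cannot simply differentiate in $b$. The integrand has an inverse square-root singularity at the moving endpoint $u_0(b)=b/r_0(b)$, so both the boundary term and the differentiated integral diverge.

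Your route does work if you also rescale $u=u_0(b)s$, i.e. $r=r_0(b)/s$. Using $b^2/r_0^2=1-v(r_0)/\lambda$, one finds
\[
\Phi_\lambda(b)=\int_0^1\frac{ds}{\sqrt{1-s^2+G(r_0(b),s)}},\qquad
G(x,s)=\frac{v(x)-v(x/s)}{\lambda-v(x)},
\]
so $\Phi_\lambda$ depends on $b$ only through the increasing function $r_0(b)$. Since $rv'$ is nondecreasing on $(r_\lambda,\infty)$, one has $v'(x)\le s^{-1}v'(x/s)$, and hence
\[
\partial_xG(x,s)=\frac{\bigl(v'(x)-s^{-1}v'(x/s)\bigr)\bigl(\lambda-v(x)\bigr)+v'(x)\bigl(v(x)-v(x/s)\bigr)}{\bigl(\lambda-v(x)\bigr)^{2}}<0 .
\]
Differentiation under the integral is legitimate, because $\partial_xG=O(1-s)$ locally uniformly in $x$ while $1-s^2+G\ge1-s$. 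This gives $\Theta_\lambda'(b)<0$. Dominated convergence with the bound $(1-s^2)^{-1/2}$ then gives the endpoint limits. As $r_0\to r_\lambda$, $G\to+\infty$, so $\Theta_\lambda(0^+)=\pi$. As $b\to\infty$, $0\le G\le v(r_0)/(\lambda-v(r_0))\to0$, so $\Theta_\lambda(+\infty)=0$.

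The paper argues differently. It works in the variable $\rho$ and writes $\Theta_\lambda'(b)=-2\int_1^\infty(\rho q_\lambda)'(bt)(t^2-1)^{-1/2}\,dt$. It then reduces the sign to the decrease of $Q_\lambda=-rv'/(\lambda-v)$. The advantage is economy: the same Abel representation \eqref{eq:deflection-Abel} serves both the monotonicity and the later inversion. Your version stays in the radial variable and reduces the hypothesis to the pointwise comparison $xv'(x)\le (x/s)v'(x/s)$. It also makes the endpoint limits immediate, whereas the paper argues them from the limiting trajectories.
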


\mn
Let us explain the main ingredients of the proof. The starting point is
the fixed-energy semiclassical asymptotic formula of Robert and Tamura
\cite{RobertTamura}. For a regular pair of incoming and outgoing
directions, the scattering amplitude has the form
\begin{equation}\label{eq:RT-asymptotics-intro}
	f(\omega\to\theta;\lambda,h)
	=
\sum_{j=1}^{\ell(\theta,\lambda)}
	\sigma(w_j;\lambda)^{-1/2}
	\exp\left(
	\frac{\ii}{h}\mathcal A_j(\omega,\theta;\lambda)
	-\frac{\ii\pi}{2}\mu_j
	\right)
	\bigl(1+\mathcal O(h)\bigr),
\end{equation}
where the sum runs over the classical scattering trajectories connecting
the incoming direction $\omega$ to the outgoing direction $\theta$.
Here $\sigma(w_j;\lambda)$ is the corresponding angular density,
$\mathcal A_j$ the classical action, and $\mu_j$ the Maslov index.
The notions of regular direction and classical branch, as well as the
precise definitions of the quantities entering
\eqref{eq:RT-asymptotics-intro}, are recalled in
Section~\ref{sec:radial}.

\mn
For general potentials, several classical trajectories may connect the same
incoming and outgoing directions. The corresponding contributions in
\eqref{eq:RT-asymptotics-intro} may then interfere, so that the semiclassical
differential cross section does not directly reduce to the classical
differential cross section of a single trajectory. In the radial setting considered here,
however, the classical dynamics can be described in terms of the
deflection function
$
b\longmapsto\Theta_\lambda(b),
$
which associates with each impact parameter $b>0$ the corresponding
scattering angle. Its precise definition is given in
Section~\ref{sec:radial}. Under
condition~\eqref{eq:radial-monotonicity-intro}, we shall prove that
$\Theta_\lambda$ is strictly decreasing from $\pi$ to $0$.

\mn 
For incoming and outgoing directions $\omega,\theta\in\mathbb S^{n-1}$,
we denote their scattering angle by
\begin{equation}\label{eq:scattering-angle-intro}
	\Theta
	=
	\arccos(\omega\cdot\theta)\in[0,\pi].
\end{equation}
Thus a trajectory with impact parameter $b$ and outgoing direction
$\theta$ satisfies
\[
\Theta=\Theta_\lambda(b).
\]
The strict monotonicity of $\Theta_\lambda$ therefore implies that,
for every $\Theta\in(0,\pi)$, there is a unique impact parameter $b$.
For fixed incoming and outgoing directions $\omega$ and $\theta$
with this scattering angle, the direction of the impact vector is
also uniquely determined, yielding a unique classical trajectory.
Moreover, as shown in Section~\ref{sec:radial}, the corresponding
outgoing directions are regular. The Robert--Tamura formula thus applies with
$\ell(\theta,\lambda)=1$. In particular, there are no interference
terms, and since $\sigma(w;\lambda)^{-1}$ is precisely the classical
differential cross section associated with the unique trajectory, we
obtain
\begin{equation}\label{eq:RT-cross-section-intro}
	|f(\omega\to\theta;\lambda,h)|^2
	\longrightarrow
	\frac{d\sigma_{\rm cl}}{d\Omega}(\Theta;\lambda),
	\qquad
	0<\Theta<\pi.
\end{equation}

\mn
The classical differential cross section determines the inverse function \(b=b(\Theta)\) of the deflection function \(b\mapsto\Theta_\lambda(b)\), and hence determines the deflection function itself. A classical Firsov--Abel inversion
\cite{Firsov1953,KellerKayShmoys1956} then recovers the radial potential
throughout the classically accessible region. This proves
Theorem~\ref{thm:radial}.

\mn
We also show that the same strategy extends to nontrapping
compactly supported metric perturbations of the Euclidean Laplacian. 
The same Gaussian-state argument shows that the classical scattering
relation is determined under the threshold condition introduced above.
Since the metrics are Euclidean near the
boundary of a sufficiently large ball, a first variation argument
also gives the lengths of the corresponding geodesics.
Combining these lens data with the rigidity theorem of
Stefanov--Uhlmann--Vasy \cite[Theorem 8.1]{SUV2021}, we obtain
uniqueness of the metric up to a boundary-fixing isometry,
assuming that one of the metrics admits a smooth strictly convex
function whose zero set is the boundary of the ball.
No simplicity assumption is needed.
Related scattering rigidity results for simple Riemannian metrics,
including metrics in a fixed conformal class and real-analytic
metrics, were obtained by Mu\~noz-Thon in the more general framework
of magnetic-potential systems \cite{MunozThon2024}.

\mn
Let us conclude by mentioning a possible extension of the radial result
to short-range potentials which are radial only near infinity.
At sufficiently large energy, classical inverse scattering results of
Jollivet \cite{JollivetRadial} provide uniqueness from suitable classical
scattering data. Extending the semiclassical recovery of the classical
scattering data to this setting would therefore lead to a corresponding
semiclassical inverse result. We do not pursue this question here.

\mn
The paper is organized as follows. In Section~\ref{sec:compact-support-recovery},
we recover the classical scattering map from the semiclassical scattering
matrix for compactly supported potentials, using Ingremeau's propagation
of Gaussian states, and then deduce uniqueness of the potential from the
rigidity theorem of Stefanov--Uhlmann--Vasy. Section~3 is devoted to
radial short-range potentials. After studying the classical radial
dynamics, we use the Robert--Tamura asymptotics and the Firsov--Abel
inversion formula to recover the potential in the classically accessible
region from the semiclassical differential cross section. Finally, Section~\ref{sec:metric-perturbations} extends the argument
to nontrapping compactly supported perturbations of the Euclidean
metric under a strict convexity assumption, without requiring
simplicity.

\section{Compactly supported potentials}
\label{sec:compact-support-recovery}

\subsection{Recovery of the scattering map }

\mn
By a simple scaling argument, we may assume without loss of generality
that the fixed non-trapping energy is $\lambda=1/2$. We therefore recall
Ingremeau's result \cite{Ingremeau} in this normalized setting, in the
form needed below. For
\begin{equation}\label{eq:Ing-rho-fixed}
	\rho=(\omega,z)\in T^*\mathbb S^{n-1},
	\qquad z\in\omega^\perp,
\end{equation}
we denote by $\kappa=\kappa_{1/2}$ the classical scattering map
introduced in \eqref{eq:classical-scattering-map}. Thus, if
\begin{equation}\label{eq:Ing-kappa-rho-compact}
	\rho_1=(\omega_1,z_1)=\kappa(\rho),
\end{equation}
then $\rho_1$ is the outgoing phase-space point associated with the
incoming datum $\rho$.

\mn
Following Ingremeau \cite{Ingremeau}, let $\Gamma_0$ be a symmetric
complex matrix with positive definite real part, let $Q_0$ be a
polynomial, and choose $\chi\in C^\infty(\mathbb R;[0,1])$ such that
\[
\chi(r)=1\quad\text{for }r\leq\frac12,
\qquad
\chi(r)=0\quad\text{for }r\geq\frac34.
\]
Associated with $\rho=(\omega,z)\in T^*\mathbb S^{n-1}$, we define
the Gaussian state
\begin{equation}\label{eq:Ing-Gaussian-compact}
	\varphi_{\rho,\Gamma_0,Q_0}(\widehat x;h)
	=
	\chi\left(
	\frac{|\widehat x-\omega|}{h^{1/3}}
	\right)
	Q_0\left(
	\frac{\widehat x-\omega}{\sqrt h}
	\right)
	e^{-\frac{\ii}{h}z\cdot\widehat x}
	e^{-\frac{1}{2h}
		(\widehat x-\omega)\cdot
		\Gamma_0(\widehat x-\omega)},
\end{equation}
where $\widehat x\in\mathbb S^{n-1}$. The Gaussian factor localizes the state at the scale $\sqrt h$ around
$\omega$, while the oscillatory factor localizes it microlocally at
the cotangent variable $z$. Thus
$\varphi_{\rho,\Gamma_0,Q_0}$ is microlocally concentrated near the
phase-space point $\rho=(\omega,z)$.

\mn
The Gaussian states used by Ingremeau may be viewed as the scattering
counterpart of the coherent states familiar from semiclassical
propagation. In particular, their localization in phase space and
their propagation along the classical scattering map are analogous
to the semiclassical propagation of Gaussian coherent states; see,
for instance, Combescure and Robert \cite{CombescureRobert}.

\mn
We now recall the main result of \cite{Ingremeau}, which describes the action of the scattering matrix on these Gaussian states at the non-trapping energy $\lambda=1/2$. With the conventions adopted in Section 1, Ingremeau's scattering matrix coincides with \(S(\lambda,h)\), $\lambda = \frac12$. Let
\begin{equation}\label{eq:Ing-outgoing-point}
	(\omega_1,z_1)
	=
	\kappa(\omega,z).
\end{equation}
Then there exist a real number $\delta_1$, a symmetric complex matrix
$\Gamma_1$ with positive definite real part, and polynomials
$Q_1^k$, $k\in\mathbb N$, such that the following holds. For every
$N\in\mathbb N$, setting
\begin{equation}\label{eq:Ing-QN}
	\widetilde Q_1^N
	=
	\sum_{k=0}^N h^{k/2}Q_1^k,
\end{equation}
one has
\begin{equation}\label{eq:Ing-main}
	S(\lambda,h)\varphi_{\omega,z,\Gamma_0,Q_0}
	=
	e^{i\delta_1/h}
	\varphi_{\omega_1,z_1,\Gamma_1,\widetilde Q_1^N}
	+
	R_N,
\end{equation}
where
\begin{equation}\label{eq:Ing-remainder}
	\|R_N\|_{C^0(\mathbb S^{n-1})}
	=
	O\left(h^{(N+1)/2}\right).
\end{equation}
Thus, to arbitrary order in the semiclassical expansion, the
scattering matrix maps a Gaussian state localized near the incoming
phase point $(\omega,z)$ to a Gaussian state localized near the
outgoing phase point $\kappa(\omega,z)$. This direct transport of
phase-space centers is the feature of Ingremeau's approach that will
be used below to recover the classical scattering map from the
semiclassical scattering matrix.

\mn
For the inverse argument below, we shall only need the particular
choice $\Gamma_0=I$ and $Q_0=1$. We therefore set
\begin{equation}\label{eq:Ing-standard-Gaussian}
	\phi_{\rho,h}(\widehat x)
	=
	c_h\,
	\chi\left(
	\frac{|\widehat x-\omega|}{h^{1/3}}
	\right)
	e^{-\frac{\ii}{h}z\cdot\widehat x}
	e^{-\frac{|\widehat x-\omega|^2}{2h}},
\end{equation}
where $c_h>0$ is chosen so that
\begin{equation}\label{eq:Ing-standard-normalization}
	\|\phi_{\rho,h}\|_{L^2(\mathbb S^{n-1})}=1.
\end{equation}

\mn
Alexandrova's Fourier integral description shows that the semiclassical
scattering matrix microlocally encodes the classical scattering relation.
Here, using Ingremeau's propagation result, we show that the quantitative
condition
\begin{equation}
\liminf_{h\to0}
\|S_1(\lambda,h)-S_2(\lambda,h)\|_
{\mathcal B(L^2(\mathbb S^{n-1}))}
<\sqrt{2}
\end{equation}
is sufficient to ensure that the corresponding classical scattering
maps coincide. We state the result for an arbitrary fixed energy $\lambda>0$,
since by a simple scaling one can reduce to the case $\lambda=1/2$.

\begin{lemma}\label{lem:recovery-kappa}
	Let $V_1,V_2\in C_c^\infty(\mathbb R^n)$ and assume that
	$\lambda>0$ is a non-trapping energy for both potentials.
	Let $S_j(\lambda,h)$ and $\kappa_{j,\lambda}$ denote respectively the
	semiclassical scattering matrix and the classical scattering map
	associated with $V_j$, $j=1,2$.
	If
	\begin{equation}\label{eq:scattering-matrices-close}
		\liminf_{h\to0}
		\|S_1(\lambda,h)-S_2(\lambda,h)\|_
		{\mathcal B(L^2(\mathbb S^{n-1}))}
		<\sqrt{2},
	\end{equation}
	then
	\begin{equation}\label{eq:scattering-maps-equal}
		\kappa_{1,\lambda}=\kappa_{2,\lambda}.
	\end{equation}
\end{lemma}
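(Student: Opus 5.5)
We argue by contraposition. Suppose $\kappa_{1,\lambda}\neq\kappa_{2,\lambda}$; we will show that
\[
\liminf_{h\to0}\|S_1(\lambda,h)-S_2(\lambda,h)\|\geq\sqrt2 .
\]
After the scaling reduction we take $\lambda=1/2$, so Ingremeau's result \eqref{eq:Ing-main} applies to both potentials, since $\lambda$ is non-trapping for both. Pick $\rho=(\omega,z)\in T^*\mathbb S^{n-1}$ with
\[
\rho_1:=\kappa_1(\rho)\neq\rho_2:=\kappa_2(\rho).
\]
We test both scattering matrices on the normalized state $\phi_{\rho,h}$ of \eqref{eq:Ing-standard-Gaussian}. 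Taking $N$ large in \eqref{eq:Ing-main}, and noting that the $C^0$ bound \eqref{eq:Ing-remainder} implies the same bound in $L^2(\mathbb S^{n-1})$ (the sphere has finite measure), we get
\[
S_j\phi_{\rho,h}=e^{\ii\delta_j/h}\psi_{j,h}+o(1)\quad\text{in }L^2,
\]
where $\psi_{j,h}=c_h\varphi_{\rho_j,\Gamma^{(j)},\widetilde Q^{(j),N}}$ is a Gaussian state centered at $\rho_j$.

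The first key step is the norm of the outgoing states. Since each $S_j$ is unitary, $\|S_j\phi_{\rho,h}\|=1$, and hence $\|\psi_{j,h}\|\to1$. No separate computation with $\Gamma_j$ or $Q_j$ is needed.

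The second key step is asymptotic orthogonality: $\langle\psi_{1,h},\psi_{2,h}\rangle\to0$. We treat two cases.
\begin{enumerate}
\item \emph{If $\omega_1\neq\omega_2$.} The cutoffs $\chi(|\widehat x-\omega_j|/h^{1/3})$ have disjoint supports for small $h$, so the inner product vanishes identically.
\item \emph{If $\omega_1=\omega_2$ but $z_1\neq z_2$.} Then
\[
\langle\psi_{1,h},\psi_{2,h}\rangle=c_h^2\int e^{-\frac{\ii}{h}(z_1-z_2)\cdot\widehat x}\,a_h(\widehat x)\,d\widehat x .
\]
Here the amplitude $a_h$ is a product of two Gaussians of width $\sqrt h$ around $\omega_1$, with polynomial prefactors. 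We use local coordinates $\widehat x=\omega_1+\sqrt h\,y+O(h)$ and $c_h^2\sim h^{-(n-1)/2}$. The integral then becomes a fixed Schwartz-type integral in $y$ multiplied by the phase $e^{-\ii(z_1-z_2)\cdot y/\sqrt h}$, up to a unimodular constant. Since $z_1-z_2$ is tangent to the sphere at $\omega_1$ (both $z_j\in\omega_1^\perp$), the frequency $|z_1-z_2|/\sqrt h\to\infty$. Riemann--Lebesgue, or an explicit Gaussian computation giving decay like $e^{-c|z_1-z_2|^2/h}$, then yields the limit $0$. The $h^{1/3}$ cutoffs and the curvature corrections only contribute $o(1)$, because the Gaussians are negligible outside $|\widehat x-\omega_1|\lesssim h^{1/2-\epsilon}$.
\end{enumerate}

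Combining the two steps,
\[
\|(S_1-S_2)\phi_{\rho,h}\|^2=\|\psi_{1,h}\|^2+\|\psi_{2,h}\|^2-2\,\mathrm{Re}\bigl(e^{\ii(\delta_1-\delta_2)/h}\langle\psi_{1,h},\psi_{2,h}\rangle\bigr)+o(1)=2+o(1).
\]
Since $\|\phi_{\rho,h}\|=1$, this gives $\|S_1-S_2\|\geq\sqrt2-o(1)$, and therefore the $\liminf$ is at least $\sqrt2$, which contradicts \eqref{eq:scattering-matrices-close}.

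The main obstacle is making the orthogonality in the case $\omega_1=\omega_2$ rigorous on the sphere. One must control the interaction of the oscillating phase $e^{-\ii z\cdot\widehat x/h}$, written in the ambient coordinate, with the curvature of $\mathbb S^{n-1}$ near $\omega_1$. One must also handle the unknown matrices $\Gamma_j$ and polynomials $Q_j$. Our plan is to parametrize a neighbourhood of $\omega_1$ by $y\in\omega_1^\perp$, via $\widehat x=\sqrt{1-|y|^2}\,\omega_1+y$. In this chart the phase difference is exactly $(z_1-z_2)\cdot y/h$, since $z_j\perp\omega_1$. The question then reduces to a standard Gaussian Fourier transform estimate in $\mathbb R^{n-1}$, in which the polynomial factors are absorbed by differentiating the Gaussian symbol.
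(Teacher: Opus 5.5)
Your proposal is correct and follows the paper's proof essentially step by step. You test both scattering matrices on the normalized Gaussian $\phi_{\rho,h}$, and Ingremeau's propagation result together with unitarity gives outgoing states of norm $1+o(1)$. Asymptotic orthogonality then comes either from the disjoint $h^{1/3}$-supports of the cutoffs (when $\omega_1\neq\omega_2$) or from the oscillating factor $e^{\frac{i}{h}(z_1-z_2)\cdot\widehat x}$ (when $\omega_1=\omega_2$, $z_1\neq z_2$), which yields $\|(S_1-S_2)\phi_{\rho,h}\|^2=2+o(1)$.

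The differences are minor. Where the paper invokes non-stationary phase to get $O(h^\infty)$, you settle for $o(1)$, using the chart $\widehat x=\sqrt{1-|y|^2}\,\omega_1+y$ (in which the phase is exactly linear) and a Gaussian/Riemann--Lebesgue estimate. You also note explicitly that the $C^0$ remainder bound passes to $L^2$ on the sphere once $N$ is taken large, which the paper leaves implicit.
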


\begin{proof}
	Fix $\rho=(\omega,z)\in T^*\mathbb S^{n-1}$ and suppose, by
	contradiction, that
	\begin{equation}\label{eq:kappa-distinct}
		\kappa_{1,\lambda}(\rho)
		\neq
		\kappa_{2,\lambda}(\rho).
	\end{equation}
	Write
	\[
	\kappa_{j,\lambda}(\rho)
	=
	(\omega_j,z_j),
	\qquad j=1,2.
	\]
	By Ingremeau's theorem, for every $M>0$,
	\begin{equation}\label{eq:Ing-two-propagations}
		S_{j}(\lambda,h)\phi_{\rho,h}
		=
		e^{i\delta_j/h}\Phi_{j,h}
		+
		O(h^M)
		\qquad\text{in }L^2(\mathbb S^{n-1}),
	\end{equation}
	where $\Phi_{j,h}$ is a Gaussian state centered at
	$(\omega_j,z_j)$. Since the expansion in
	\eqref{eq:Ing-main} is available to arbitrary order, $M$ can be
	chosen arbitrarily large. We claim that \eqref{eq:kappa-distinct}
	implies
	\begin{equation}\label{eq:outgoing-orthogonality}
		\left\langle
		S_{1}(\lambda,h)\phi_{\rho,h},
		S_{2}(\lambda,h)\phi_{\rho,h}
		\right\rangle
		=
		O(h^\infty).
	\end{equation}
	Indeed, if $\omega_1\neq\omega_2$, the cutoffs in the two outgoing
	Gaussian states have disjoint supports for $h$ sufficiently small,
	since they are supported in balls of radius $O(h^{1/3})$ around
	the distinct points $\omega_1$ and $\omega_2$. It remains to
	consider the case
	\begin{equation}\label{eq:same-outgoing-direction}
		\omega_1=\omega_2=:\omega_+,
		\qquad
		z_1\neq z_2.
	\end{equation}
	The two outgoing states are then localized near the same point
	$\omega_+$, but the product entering their scalar product contains
	the oscillatory factor
	\begin{equation}\label{eq:different-impact-phase}
		\exp\left(
		\frac{i}{h}(z_1-z_2)\cdot\widehat x
		\right).
	\end{equation}
	Since $z_1-z_2\in\omega_+^\perp$ and $z_1-z_2\neq0$, the phase in
	\eqref{eq:different-impact-phase} has no critical point on the
	support of the Gaussian states for $h$ sufficiently small. Hence, by
	non-stationary phase,
	\begin{equation}\label{eq:outgoing-inner-product}
		\langle\Phi_{1,h},\Phi_{2,h}\rangle
		=
		O(h^\infty),
	\end{equation}
	and \eqref{eq:outgoing-orthogonality} follows. Since the scattering matrices are unitary, by
	\eqref{eq:Ing-standard-normalization} we have
	\begin{equation}\label{eq:outgoing-unit-norm}
		\|S_{j}(\lambda,h)\phi_{\rho,h}\|_{L^2}
		=
		1,
		\qquad j=1,2.
	\end{equation}
Hence, using \eqref{eq:outgoing-orthogonality},
\begin{align}
	\big\|
	(S_{1}(\lambda,h)-S_{2}(\lambda,h))
	\phi_{\rho,h}
	\big\|_{L^2}^2
	&=
	2
	-
	2\operatorname{Re}
	\left\langle
	S_{1}(\lambda,h)\phi_{\rho,h},
	S_{2}(\lambda,h)\phi_{\rho,h}
	\right\rangle
	\nonumber\\
	&=
	2+O(h^\infty).
	\label{eq:key-kappa-contradiction}
\end{align}
Since $\|\phi_{\rho,h}\|_{L^2}=1$, it follows that
\[
\|S_{1}(\lambda,h)-S_{2}(\lambda,h)\|_
{\mathcal B(L^2(\mathbb S^{n-1}))}
\geq
\big\|
(S_{1}(\lambda,h)-S_{2}(\lambda,h))
\phi_{\rho,h}
\big\|_{L^2}
=
\sqrt{2}+O(h^\infty).
\]
Consequently,
\[
\liminf_{h\to0}
\|S_{1}(\lambda,h)-S_{2}(\lambda,h)\|_
{\mathcal B(L^2(\mathbb S^{n-1}))}
\geq \sqrt{2},
\]
which contradicts \eqref{eq:scattering-matrices-close}. Therefore
\begin{equation}\label{eq:kappa-pointwise-equality}
	\kappa_{1,\lambda}(\rho)
	=
	\kappa_{2,\lambda}(\rho).
\end{equation}
Since $\rho\in T^*\mathbb S^{n-1}$ was arbitrary,
\eqref{eq:scattering-maps-equal} follows.
\end{proof}

\begin{remark}
	The constant $\sqrt{2}$ in Lemma~\ref{lem:recovery-kappa} has a simple
	interpretation in the radial case. Indeed, let
	$\{Y_{\ell,m}\}$ be an orthonormal basis of spherical harmonics on
	$\mathbb S^{n-1}$. If $\delta_{j,\ell}(\lambda,h)$ denotes the phase
	shift corresponding to the angular momentum $\ell$, then
	\[
	S_j(\lambda,h)Y_{\ell,m}
	=
	e^{2i\delta_{j,\ell}(\lambda,h)}Y_{\ell,m},
	\]
	and therefore
	\[
	\|S_1(\lambda,h)-S_2(\lambda,h)\|_
	{\mathcal B(L^2(\mathbb S^{n-1}))}
	=
	2\sup_{\ell\geq0}
	\left|
	\sin\bigl(
	\delta_{1,\ell}(\lambda,h)-\delta_{2,\ell}(\lambda,h)
	\bigr)
	\right|.
	\]
	Thus the assumption of Lemma~\ref{lem:recovery-kappa} is equivalent to
	\[
	\liminf_{h\to0}\sup_{\ell\geq0}
	\operatorname{dist}\!\left(
	\delta_{1,\ell}(\lambda,h)-\delta_{2,\ell}(\lambda,h),
	\pi\mathbb Z
	\right)
	<
	\frac{\pi}{4}.
	\]
	The critical value $\sqrt{2}$ corresponds to a phase-shift difference
	at distance $\pi/4$ from $\pi\mathbb Z$, that is, equal to
	$\pm\pi/4$ modulo $\pi$.
\end{remark}

\subsection{From the scattering map to the potential}
\label{subsec:scattering-map-potential}

We now turn to the classical inverse problem. We assume first that the
energy lies above the potentials,
\begin{equation}\label{eq:energy-above-potential}
	\lambda>\sup_{\mathbb R^n}V_j,
	\qquad j=1,2,
\end{equation}
and that the virial condition
\begin{equation}\label{eq:virial-condition}
	2(\lambda-V_j(x))-x\cdot\nabla V_j(x)>0,
	\qquad x\in\mathbb R^n,\qquad j=1,2,
\end{equation}
holds. The condition \eqref{eq:virial-condition} implies that the
Hamiltonian flow at energy $\lambda$ is non-trapping.

\mn
By the Maupertuis principle, under
\eqref{eq:energy-above-potential}, the trajectories at energy
$\lambda$, up to a reparametrization, are the geodesics of the Jacobi
metrics
\begin{equation}\label{eq:Jacobi-metrics}
	g_j=2(\lambda-V_j)g_0,
	\qquad j=1,2,
\end{equation}
where $g_0$ denotes the Euclidean metric on $\mathbb R^n$. Equivalently,
we may write
\begin{equation}\label{eq:Jacobi-speeds}
	g_j=c_j^{-2}g_0,
	\qquad
	c_j=\bigl(2(\lambda-V_j)\bigr)^{-1/2},
	\qquad j=1,2.
\end{equation}

\mn
This reduction was already used by Alexandrova \cite{Alexandrova}
under the  assumption that the Jacobi metric is simple.
In that setting, she used an argument implicit in Michel
\cite{Michel1981} to recover the boundary distance function from the
scattering relation, and then applied the boundary rigidity results of
Mukhometov \cite{Mukhometov1981} and Mukhometov--Romanov
\cite{MukhometovRomanov1978} to determine the conformal factor.
More recently, in a purely classical setting, Mu\~noz-Thon
\cite{MunozThon2024} obtained scattering rigidity results at a single
energy for simple magnetic-potential systems.In particular, in the non-magnetic case, his result recovers the
classical rigidity statement used by Alexandrova: the classical
scattering relation determines the potential when the associated
Jacobi metric is simple.

\mn
The geometric assumption used here is different. We do not require
the Jacobi metrics to be simple, but instead use the rigidity theorem
of Stefanov--Uhlmann--Vasy \cite{SUV2016}, which applies when the
manifold admits a strictly convex foliation. Our virial condition
provides precisely such a foliation for the Jacobi metrics. Since this
condition does not exclude conjugate points, it applies to geometries
which need not be simple. We note that Mu\~noz-Thon
\cite{MunozThon2024} explicitly raised the question of extending
scattering rigidity for magnetic-potential systems from the simple
setting to the strictly convex foliation framework.

\begin{definition}\label{def:convex-foliation}
	Let $(M,g)$ be a compact Riemannian manifold with boundary.
	We say that $M$ admits a strictly convex foliation if there exist
	$T>0$ and a smooth function
	\[
	\rho:M\longrightarrow [0,+\infty)
	\]
	such that, for every $t\in[0,T)$, the level set
	\[
	\Sigma_t=\rho^{-1}(t)
	\]
	is a strictly convex hypersurface, $d\rho\neq0$ on $\Sigma_t$,
	$\Sigma_0=\partial M$, and
	\[
	M\setminus\bigcup_{0\leq t<T}\Sigma_t
	\]
	has empty interior.
\end{definition}

\mn
In our setting, the virial condition provides precisely such a
foliation. Indeed, on $M=\overline{B(0,R)}$, we take
$
	\rho(x)=R^2-|x|^2.
$
Then, for $0\leq t<R^2$,
\begin{equation}\label{eq:sigma-convex-foliation}
	\Sigma_t=\rho^{-1}(t)
	=
	\{|x|=\sqrt{R^2-t}\}.
\end{equation}
Thus the leaves of the foliation are precisely the Euclidean spheres
centered at the origin. 

\mn
The following lemma shows that the virial condition indeed guarantees
the strict convexity of these leaves with respect to the Jacobi metrics.
This observation is likely known, but we have been unable to find an
explicit statement of it in the literature.

\begin{lemma}\label{lem:virial-convex-foliation}
	Under \eqref{eq:energy-above-potential} and
	\eqref{eq:virial-condition}, the Euclidean spheres form a strictly
	convex foliation for the Jacobi metrics \eqref{eq:Jacobi-metrics}.
\end{lemma}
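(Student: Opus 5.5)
We need to show that, for $g=e^{2\varphi}g_0$ with $e^{2\varphi}=2(\lambda-V)$, every Euclidean sphere $S_r=\{|x|=r\}$, $0<r\le R$, is strictly convex for $g$. Our approach is to compute its second fundamental form under the conformal change. We take $R$ large enough that $\operatorname{supp}V_j\subset B(0,R)$, and we use the function $\rho(x)=R^2-|x|^2$ introduced above. Then $d\rho=-2x\cdot dx\neq0$ on every leaf $\Sigma_t$, $0\le t<R^2$, and $\Sigma_0=\partial M$. Moreover, $M\setminus\bigcup_{t<R^2}\Sigma_t=\{0\}$ has empty interior. So everything reduces to strict convexity of each sphere $S_r$ with respect to $g_j$, where convexity is measured from the side of increasing $\rho$, i.e. the inside of the sphere.

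For the convexity, we use the standard transformation law of the second fundamental form under a conformal change $\tilde g=e^{2\varphi}g_0$. If $N$ is the $g_0$-unit normal and $\tilde N=e^{-\varphi}N$, then
\[
\widetilde{\mathrm{II}}(X,Y)=e^{\varphi}\bigl(\mathrm{II}_0(X,Y)+(N\varphi)\,g_0(X,Y)\bigr).
\]
This follows from the Koszul formula $\tilde\nabla_XY=\nabla_XY+(X\varphi)Y+(Y\varphi)X-g_0(X,Y)\nabla\varphi$ applied to tangent fields $X,Y$. For $S_r$ with outward normal $N=x/r$, we have $\mathrm{II}_0=\frac1r g_0$ on $TS_r$, with the sign convention under which spheres are convex seen from inside. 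Hence
\[
\widetilde{\mathrm{II}}=e^{\varphi}\Bigl(\frac1r+\frac{x}{r}\cdot\nabla\varphi\Bigr)g_0\big|_{TS_r}.
\]

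Next we substitute $\varphi=\tfrac12\log\bigl(2(\lambda-V)\bigr)$, which is well defined by \eqref{eq:energy-above-potential}. This gives $x\cdot\nabla\varphi=-\dfrac{x\cdot\nabla V}{2(\lambda-V)}$. Therefore
\[
\widetilde{\mathrm{II}}=\frac{e^{\varphi}}{r}\cdot\frac{2(\lambda-V)-x\cdot\nabla V}{2(\lambda-V)}\,g_0\big|_{TS_r},
\]
which is positive definite exactly by the virial condition \eqref{eq:virial-condition}. This gives strict convexity of every leaf for both $g_1$ and $g_2$. As a sanity check, one can equivalently verify that a $g$-geodesic tangent to $S_r$ satisfies $\frac{d^2}{ds^2}|x|^2>0$. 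The same virial quantity appears there, because along the Hamiltonian flow $\frac{d^2}{dt^2}|q|^2=2|p|^2-2q\cdot\nabla V=4(\lambda-V)-2x\cdot\nabla V$ on $\Sigma_\lambda$. Geodesics of $g$ are reparametrizations of these trajectories, and at a tangency point ($\dot{|q|^2}=0$) the reparametrization does not affect the sign of the second derivative. This dynamical computation is an alternative route and also confirms the nontrapping claim.

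The main obstacle is only bookkeeping: keeping sign conventions straight, namely which normal is used and on which side convexity is required in Definition \ref{def:convex-foliation} and in \cite{SUV2016}. We will check them with the flat case $V=0$, where the spheres must come out convex. The dynamical computation serves as an independent check of the sign.
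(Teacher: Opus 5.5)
Your proof is correct, and it differs from the paper's mainly in that it replaces a citation by a computation.

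The paper writes $g=c^{-2}g_0$ and observes that $\partial_r(r/c)=\bigl(2(\lambda-V)-x\cdot\nabla V\bigr)/\sqrt{2(\lambda-V)}$. So the virial condition is exactly the Herglotz--Wiechert--Zoeppritz condition, and the paper then invokes \cite[Proposition~6.1]{SUV2016} for the equivalence between this condition and strict convexity of the Euclidean spheres.

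You instead derive the second fundamental form directly from the conformal change of the Levi-Civita connection. Your final expression simplifies to $\widetilde{\mathrm{II}}=\frac1r\,\partial_r(r/c)\,g_0|_{TS_r}$, which is precisely the identity behind the cited proposition. Your argument therefore makes the paper's black box explicit, and it visibly needs no radial symmetry of $c$. That matters, since the $V_j$ are not radial in Theorem~\ref{thm:compact}.

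Your route buys several things:
\begin{itemize}
\item it is self-contained;
\item it gives a quantitative formula for the curvature of the leaves;
\item it explicitly checks the axioms of Definition~\ref{def:convex-foliation}, including the side from which convexity is measured, whereas the paper only notes that the spheres foliate $\mathbb R^n\setminus\{0\}$;
\item through the identity $\frac{d^2}{dt^2}|q|^2=2\bigl(2(\lambda-V)-x\cdot\nabla V\bigr)$, it essentially proves the non-trapping claim that the paper states without argument.
\end{itemize}
For that last point, compact support of $V$ and continuity give a uniform positive lower bound for the right-hand side, so $|q(t)|\to\infty$ uniformly.

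The paper's route buys brevity, and it ties the virial hypothesis to the classical seismological Herglotz condition, which is the form in which \cite{SUV2016} state their result.
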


\begin{proof}
	It is enough to consider one potential $V$. Writing the Jacobi
	metric in the form
	\begin{equation}\label{eq:Jacobi-speed}
		g=c^{-2}g_0,
		\qquad
		c=\bigl(2(\lambda-V)\bigr)^{-1/2},
	\end{equation}
	we have
	\begin{equation}\label{eq:Herglotz-virial-equivalence}
		\frac{\partial}{\partial r}
		\left(\frac{r}{c}\right)
		=
		\frac{
			2(\lambda-V(x))-x\cdot\nabla V(x)
		}{
			\sqrt{2(\lambda-V(x))}
		}.
	\end{equation}
	Hence \eqref{eq:virial-condition} implies the
	Herglotz--Wiechert--Zoeppritz condition
	\begin{equation}\label{eq:Herglotz-condition}
		\frac{\partial}{\partial r}
		\left(\frac{r}{c}\right)>0.
	\end{equation}
	By \cite[Proposition~6.1]{SUV2016}, this is equivalent to the
	strict convexity of the Euclidean spheres for the Jacobi metric.
	As the concentric spheres foliate $\mathbb R^n\setminus\{0\}$,
	the conclusion follows.
\end{proof}

\mn
We briefly recall the notion of scattering relation used in
Stefanov--Uhlmann--Vasy \cite{SUV2016}. Let $(M,g)$ be a compact
Riemannian manifold with boundary. In our application below, $M$ will
simply be the closed Euclidean ball $\overline{B(0,R)}\subset\mathbb R^n$,
equipped with one of the Jacobi metrics \eqref{eq:Jacobi-metrics}. We denote by $\partial_-SM$ and
$\partial_+SM$ the sets of unit tangent vectors based at $\partial M$
and pointing respectively into and out of $M$. For $(x_-,v_-)\in\partial_-SM$, let $\gamma_{x_-,v_-}$ be the
unique unit speed geodesic issued from $(x_-,v_-)$. In our setting, the non-trapping assumption ensures that this geodesic
has a finite exit time, defining an outgoing point and direction
$(x_+,v_+)\in\partial_+SM$. The scattering relation is defined by
\begin{equation}\label{eq:SUV-scattering-relation}
	L(x_-,v_-)
	=
	(x_+,v_+).
\end{equation}


\begin{figure}[ht]
	\centering
	\begin{tikzpicture}[scale=0.7]
		\draw[thick] (0,0) circle (3);
		
		\coordinate (A) at (-2.72,-1.27);
		\coordinate (B) at (2.55,1.58);
		
		\fill (A) circle (1.5pt);
		\fill (B) circle (1.5pt);
		
		\draw[thick]
		(A)
		.. controls (-1.55,-0.72) and (0.65,-0.65) ..
		(B);
		
		\draw[->,thick]
		(A) -- ++(0.94,0.44)
		node[midway,below right] {$v_-$};
		
		\draw[->,thick]
		(B) -- ++(0.68,0.80)
		node[midway,above left] {$v_+$};
		
		\node[below left] at (A) {$x_-$};
		\node[above left] at (B) {$x_+$};
		\node at (0,3.35) {$\partial M$};
		\node at (-0.1,-0.15) {$\gamma_{x_-,v_-}$};
		
		\node at (0,-3.55)
		{$L(x_-,v_-)=(x_+,v_+)$};
	\end{tikzpicture}
	\caption{The scattering relation of a Riemannian metric:
		an incoming point and direction $(x_-,v_-)$ are mapped to the
		corresponding exit point and direction $(x_+,v_+)$.}
	\label{fig:SUV-scattering-relation}
\end{figure}


\mn
By Lemma~\ref{lem:recovery-kappa}, the assumption
\begin{equation}\label{eq:scattering-matrices-close-inverse}
	\liminf_{h\to0}
	\|S_1(\lambda,h)-S_2(\lambda,h)\|_
	{\mathcal B(L^2(\mathbb S^{n-1}))}
	<\sqrt{2},
\end{equation}
implies
\begin{equation}\label{eq:classical-scattering-equality}
	\kappa_{1,\lambda}
	=
	\kappa_{2,\lambda}.
\end{equation}
Since $V_1$ and $V_2$ are compactly supported, we may choose
$R>0$ such that
\begin{equation}\label{eq:supports-in-ball}
	\operatorname{supp}V_1\cup\operatorname{supp}V_2
	\subset B(0,R).
\end{equation}
Outside $B(0,R)$ the Hamiltonian trajectories are straight lines.
Hence the incoming and outgoing asymptotic data defining
$\kappa_{j,\lambda}$ determine uniquely the corresponding entry and
exit points and directions on $\partial B(0,R)$. It follows from \eqref{eq:classical-scattering-equality} that the
scattering relations $L_j$ of the Jacobi metrics on $B(0,R)$ satisfy
\begin{equation}\label{eq:boundary-scattering-relations}
	L_1=L_2.
\end{equation}
Notice that no travel-time information is needed here, since the
rigidity result \cite[Theorem~1.4]{SUV2016} only requires equality of
the scattering relations. Finally, since $V_1=V_2=0$ near
$\partial B(0,R)$, the Jacobi metrics coincide there:
\begin{equation}\label{eq:Jacobi-boundary-equality}
	g_1=g_2=2\lambda g_0
	\qquad\text{near }\partial B(0,R).
\end{equation}

\begin{figure}[ht]
	\centering
	\begin{tikzpicture}[scale=0.6]
		\draw[thick] (0,0) circle (3);
		
		\draw[dashed] (0,0) ellipse (1.15 and 0.8);
		\node at (0,-0.15) {$\operatorname{supp}V$};
		
		\coordinate (A) at (-2.88,-0.84);
		\coordinate (B) at (2.55,1.58);
		
		\draw[thick] (-4.5,-1.35) -- (A);
		\node[left] at (-4.5,-1.35) {$(\omega,z)$};
		
		\draw[thick]
		(A)
		.. controls (-1.30,-0.35) and (1.55,0.96) ..
		(B);
		
		\draw[thick] (B) -- (4.35,2.70);
		\node[right] at (4.35,2.70) {$(\omega_+,z_+)$};
		
		\fill (A) circle (1.5pt);
		\fill (B) circle (1.5pt);
		
		\node[below left] at (A) {$x_-$};
		\node[above left] at (B) {$x_+$};
		
		\draw[thick,->]
		(A) -- ++(1.00,0.315)
		node[pos=0.68,above left] {$v_-$};
		
		\draw[thick,->]
		(B) -- ++(1.00,0.622)
		node[pos=0.65,above left] {$v_+$};
		
		\node at (0,3.35) {$\partial B(0,R)$};
		
	\node at (0,-3.55)
	{$\kappa_\lambda(\omega,z)=(\omega_+,z_+)$};
	
	\node at (0,-4.35)
	{$L(x_-,v_-)=(x_+,v_+)$};
	\end{tikzpicture}
	\caption{Relation between the classical scattering map
		$\kappa_\lambda$ and the scattering relation $L$ on
		$\partial B(0,R)$. Outside the support of $V$, the trajectories
		are straight lines, so that the asymptotic data determine uniquely
		the corresponding entry and exit data on the boundary.}
	\label{fig:scattering-map-lens-relation}
\end{figure}

\newpage
\mn
We may therefore apply the global rigidity theorem of
Stefanov--Uhlmann--Vasy \cite[Theorem~1.4]{SUV2016}. Indeed,
Lemma~\ref{lem:virial-convex-foliation} provides the required strictly
convex foliation. It follows that
$
	g_1
	=
	g_2.
$
In view of \eqref{eq:Jacobi-metrics}, we conclude that
$
	V_1
	=
	V_2.
$


\section{Radial short-range potentials}
\label{sec:radial}

We now consider smooth radial repulsive short-range potentials
\begin{equation}\label{eq:radial-short-range}
	V(x)=v(|x|),
	\qquad
	v\in C^\infty([0,+\infty)),
	\qquad
	|\partial_r^k v(r)|
	\leq C_k\langle r\rangle^{-\mu-k},
	\qquad \mu>1.
\end{equation}
Following the classical setting of Keller--Kay--Shmoys \cite{KellerKayShmoys1956},
we assume that
\begin{equation}\label{eq:radial-keller}
	v(r)>0,
	\qquad
	v'(r)<0,
	\qquad r>0,
\end{equation}
and that
\begin{equation}\label{eq:radial-energy}
	v(0)>\lambda.
\end{equation}

\mn
Since $v(r)\to0$ as $r\to+\infty$, assumptions
\eqref{eq:radial-keller}--\eqref{eq:radial-energy} imply that there exists
a unique $r_\lambda>0$ such that
\begin{equation}
v(r_\lambda)=\lambda.
\end{equation}
The region $0<r<r_\lambda$ is classically forbidden at energy
$\lambda$, whereas the classically accessible region is
$
r>r_\lambda.
$
Thus, \eqref{eq:radial-energy} ensures the presence of a nonempty
classically forbidden core. This assumption also plays an important role
in the scattering geometry. Indeed, if $v(0)<\lambda$, one can find that
the deflection angle $\Theta_\lambda(b)$, defined below in
\eqref{eq:deflection}, tends to zero both as the impact parameter
$b$ tends to zero and as $b$ tends to infinity. Hence
$\Theta_\lambda(b)$ cannot be strictly monotone in this case, and several
impact parameters may correspond to the same scattering angle. The
presence of the forbidden core, together with the monotonicity assumption
introduced below, will instead imply that $\Theta_\lambda$ is strictly
monotone.

\begin{figure}[ht]
	\centering
	\begin{tikzpicture}[scale=0.9]
		
		\draw[->] (0,0) -- (8.2,0) node[right] {$r$};
		\draw[->] (0,0) -- (0,5.2) node[above] {$v(r)$};
		
		\draw[dashed] (0,2.2) -- (7.6,2.2);
		\node[left] at (0,2.2) {$\lambda$};
		
		\draw[thick]
		(0.25,4.6)
		.. controls (1.0,4.2) and (1.5,3.0) ..
		(2.2,2.2)
		.. controls (3.2,1.2) and (5.2,0.45) ..
		(7.4,0.18);
		
		\draw[dashed] (2.2,0) -- (2.2,2.2);
		\node[below] at (2.2,0) {$r_\lambda$};
		
		\node at (1.05,1.15) {\small classically forbidden};
		\node at (1.05,0.70) {\small $v(r)>\lambda$};
		
		\node at (5.05,1.15) {\small classically accessible};
		\node at (5.05,0.70) {\small $v(r)<\lambda$};
		
	\end{tikzpicture}
	\caption{A radial repulsive potential in the setting of
		Keller--Kay--Shmoys. The unique radius $r_\lambda$ is defined by
		$v(r_\lambda)=\lambda$. At the fixed energy $\lambda$, the region
		$0<r<r_\lambda$ is classically forbidden, whereas $r>r_\lambda$
		is classically accessible.}
	\label{fig:radial-accessible-region}
\end{figure}

\mn 
In the radial setting, the virial condition
\eqref{eq:virial-condition} takes the form
\begin{equation}\label{eq:radial-virial}
	2\bigl(\lambda-v(r)\bigr)-r v'(r)>0.
\end{equation}
On the energy surface
\[
\frac12|\xi|^2+v(r)=\lambda,
\]
one necessarily has $v(r)\leq\lambda$, and hence $r\geq r_\lambda$.
Since $v'(r)<0$, \eqref{eq:radial-virial} is therefore satisfied on
the whole energy surface, including at $r=r_\lambda$. In particular,
the classical dynamics at energy $\lambda$ is non-trapping, so that
the semiclassical scattering results of Robert--Tamura apply.

\mn
We further assume that
\begin{equation}\label{eq:radial-convexity}
	\bigl(r v'(r)\bigr)'\geq0,
	\qquad r>r_\lambda.
\end{equation}
This condition will be used below to prove the strict monotonicity of the
deflection function $b\mapsto\Theta_\lambda(b)$, and hence the uniqueness
of the impact parameter associated with a given scattering angle. Related sufficient conditions for the monotonicity of the deflection
function were obtained, in the setting of compactly supported radial
potentials, by Pulvirenti--Saffirio--Simonella
\cite[Appendix A, condition (A.8)]{PulvirentiSaffirioSimonella2014}
and Gallagher--Saint-Raymond--Texier
\cite[Lemma 8.3.1]{GallagherSaintRaymondTexier2014}.
Their condition is
\[
rv''(r)+2v'(r)\geq0,
\]
whereas \eqref{eq:radial-convexity} reads
\[
rv''(r)+v'(r)\geq0.
\]
Since $v'<0$, their condition is stronger than
\eqref{eq:radial-convexity}. Moreover, the potentials considered here
are short-range and need not be compactly supported. Although the
argument below is similar in spirit to these previous monotonicity
arguments, their results do not apply directly in the present setting.
We therefore give the complete argument below.

\begin{remark}\label{rem:radial-convexity}
	Condition \eqref{eq:radial-convexity} has a natural interpretation
	in terms of the radial force
	\[
	F(r):=-v'(r)>0.
	\]
	It is equivalent to requiring that the weighted radial force
	$rF(r)=-rv'(r)$ be non-increasing on the classically accessible region.
	For instance, for the inverse-power profile
	\begin{equation}\label{eq:power-potential}
		v(r)=Cr^{-\rho},
		\qquad r\geq R>0,
		\qquad C>0,\quad \rho>0,
	\end{equation}
	one has
	\begin{equation}\label{eq:power-convexity}
		\bigl(rv'(r)\bigr)'
		=
		\rho^2 C r^{-\rho-1}>0.
	\end{equation}
\end{remark}


\subsection{Review of classical radial dynamics}
\label{subsec:radial-dynamics}

\mn
We briefly recall some standard facts about the classical dynamics of
radial potentials; see, for instance,
Goldstein~\cite{Goldstein},
Keller--Kay--Shmoys~\cite{KellerKayShmoys1956},
Jollivet~\cite{JollivetRadial},
Pulvirenti--Saffirio--Simonella~\cite{PulvirentiSaffirioSimonella2014},
and Gallagher--Saint-Raymond--Texier~\cite{GallagherSaintRaymondTexier2014}. These facts are classical, and we recall
them here only to fix the notation and sign conventions used below. 

\mn 
Under the assumptions
\eqref{eq:radial-keller}--\eqref{eq:radial-energy}, the motion at the
fixed energy $\lambda$ takes place in the classically accessible
region $r\geq r_\lambda$. Together with the additional assumption
\eqref{eq:radial-convexity}, these hypotheses will imply that each scattering angle in $(0,\pi)$ corresponds to a unique
impact parameter $b>0$.

\mn
Let $b=|z|>0$ be the impact parameter. Since the force is central,
the trajectory lies in the scattering plane
\begin{equation}\label{eq:scattering-plane}
	\Pi_{\omega,z}:=\operatorname{span}\{\omega,z\},
\end{equation}
and the angular momentum $q_\infty\wedge p_\infty$ is conserved;
see, for instance, \cite[Sections~3.2 and~3.10]{Goldstein}.
Its magnitude $L$ is determined by the incoming asymptotics:
\begin{equation}\label{eq:angular-momentum-b}
	L=|q_\infty\wedge p_\infty|
	=\sqrt{2\lambda}\,b.
\end{equation}


\mn
Let $(r(t),\varphi(t))$ be polar coordinates in the scattering plane
$\Pi_{\omega,z}$, with $r(t)=|q_\infty(t;z,\lambda)|$.
Conservation of angular momentum gives
\begin{equation}\label{eq:angular-momentum-polar}
	r(t)^2|\dot\varphi(t)|=L=\sqrt{2\lambda}\,b.
\end{equation}
Using conservation of energy, we obtain the radial equation
\begin{equation}\label{eq:radial-energy-equation}
	\frac12\dot r(t)^2
	+\frac{L^2}{2r(t)^2}
	+v(r(t))
	=
	\frac12\dot r(t)^2
	+\lambda\frac{b^2}{r(t)^2}
	+v(r(t))
	=\lambda.
\end{equation}
It is useful to introduce the effective radial potential
\begin{equation}\label{eq:effective-radial-potential}
	v_{\mathrm{eff}}(r;b)
	:=
	v(r)+\lambda\frac{b^2}{r^2}.
\end{equation}
Under the assumption $v'(r)<0$, one has
\begin{equation}\label{eq:effective-radial-potential-derivative}
	\partial_r v_{\mathrm{eff}}(r;b)
	=
	v'(r)-2\lambda\frac{b^2}{r^3}
	<0,
	\qquad r>0.
\end{equation}
Thus $v_{\mathrm{eff}}(\,\cdot\,;b)$ is strictly decreasing. Moreover,
for $b>0$,
\begin{equation}\label{eq:effective-radial-potential-limits}
	v_{\mathrm{eff}}(r;b)\longrightarrow+\infty
	\quad\text{as }r\to0^+,
	\qquad
	v_{\mathrm{eff}}(r;b)\longrightarrow0
	\quad\text{as }r\to+\infty.
\end{equation}
Consequently, for every $b>0$, there exists a unique turning point
$r_0(b)$ characterized by
\begin{equation}\label{eq:radial-turning-point}
	v(r_0(b))
	+
	\lambda\frac{b^2}{r_0(b)^2}
	=
	\lambda.
\end{equation}
Since \eqref{eq:radial-turning-point} yields
$v(r_0(b))<\lambda=v(r_\lambda)$ and $v$ is strictly decreasing, one
necessarily has $r_0(b)>r_\lambda$. Geometrically, $r_0(b)$ is the
distance of closest approach of the trajectory to the origin. More
precisely, if $t_0$ denotes the time at which the trajectory reaches
the turning point, then $\dot r(t_0)=0$ and
\begin{equation}\label{eq:radial-closest-approach}
	r_0(b)
	=
	r(t_0)
	=
	\min_{t\in\mathbb R} r(t).
\end{equation}
By rotational invariance, the distance of closest approach depends on
the impact vector $z$ only through $b=|z|$. Rewriting
\eqref{eq:radial-turning-point}, we obtain
\begin{equation}\label{eq:rho-turning}
	b
	=
	\rho_\lambda(r_0(b)),
	\qquad
	\rho_\lambda(r)
	:=
	r\sqrt{1-\frac{v(r)}{\lambda}},
	\qquad r\geq r_\lambda.
\end{equation}
Thus, the function $\rho_\lambda$ relates the radius $r_0(b)$ of the
turning point to the corresponding impact parameter $b$. It will play
a central role in the inversion argument below.

\mn
By \eqref{eq:radial-keller}, for $r>r_\lambda$,
\begin{equation}\label{eq:rho-derivative}
	\rho_\lambda'(r)
	=
	\sqrt{1-\frac{v(r)}{\lambda}}
	\left(
	1-\frac{r v'(r)}
	{2(\lambda-v(r))}
	\right)
	>0,
\end{equation}
since $\lambda-v(r)>0$ and $v'(r)<0$. Hence $\rho_\lambda$ is strictly
increasing from $0$ to $+\infty$ on $[r_\lambda,+\infty)$, and
\begin{equation}\label{eq:turning-point-parametrization}
	b=\rho_\lambda(r_0(b))
\end{equation}
provides a one-to-one parametrization of the radial turning points by
the impact parameter $b\geq0$. On the outgoing part of the trajectory, $r(t)$ increases from
$r_0(b)$ to $+\infty$. From \eqref{eq:radial-energy-equation}, we have
\begin{equation}\label{eq:radial-velocity}
	\dot r(t)
	=
	\sqrt{2\lambda}\,
	\sqrt{
		1-\frac{v(r(t))}{\lambda}
		-\frac{b^2}{r(t)^2}
	}.
\end{equation}
For $r>r_0(b)$, we may therefore use $r$ as a parameter along the
trajectory. Combining \eqref{eq:radial-velocity} with
\eqref{eq:angular-momentum-polar} and using the chain rule, we obtain
\begin{equation}\label{eq:angular-radial-relation}
	\left|\frac{d\varphi}{dr}\right|
	=
	\frac{|\dot\varphi(t)|}{\dot r(t)}
	=
	\frac{b}{
		r^2
		\sqrt{
			1-\dfrac{v(r)}{\lambda}
			-\dfrac{b^2}{r^2}
	}}.
\end{equation}
Then, the angular variation along the outgoing part of the trajectory is
\begin{equation}\label{eq:Phi-def}
	\Phi_\lambda(b)
	:=
	b\int_{r_0(b)}^{+\infty}
	\frac{dr}{
		r^2
		\sqrt{
			1-\dfrac{v(r)}{\lambda}
			-\dfrac{b^2}{r^2}
	}}.
\end{equation}
By the symmetry of the central-force trajectory with respect to the
line through the origin and the point of closest approach, the angular
variation from the incoming asymptote to the turning point equals that
from the turning point to the outgoing asymptote. Hence the total
angular variation between the two asymptotes is $2\Phi_\lambda(b)$.
We therefore define the signed deflection function by
\begin{equation}\label{eq:deflection}
	\Theta_\lambda(b)
	:=
	\pi-2\Phi_\lambda(b)
	=
	\pi
	-
	2b\int_{r_0(b)}^{+\infty}
	\frac{dr}{
		r^2
		\sqrt{
			1-\dfrac{v(r)}{\lambda}
			-\dfrac{b^2}{r^2}
	}}.
\end{equation}
This is the standard formula for the deflection angle in a central
potential; see Goldstein~\cite[p.~108, Eq.~(3.96)]{Goldstein}, with the
notation adapted to the present setting.
\begin{remark}\label{rem:free-deflection}
	Let us check the above convention in the free case $V=0$. Then
	$v(r)=0$ and the turning point equation gives
	$
		r_0(b)=b.
$
	Consequently,
	\begin{equation}\label{eq:free-Phi}
		\Phi_\lambda(b)
		=
		b\int_b^{+\infty}
		\frac{dr}{
			r^2\sqrt{1-\dfrac{b^2}{r^2}}
		}
		=
		\frac{\pi}{2}.
	\end{equation}
	Therefore,
	$
		\Theta_\lambda(b)
		=
		\pi-2\Phi_\lambda(b)
		=
		0,
	$
	as expected, since in the absence of a potential the incoming and
	outgoing directions coincide.
\end{remark}

\mn
The geometry of the resulting radial scattering trajectory is illustrated
in Figure~\ref{fig:radial-scattering}.

\begin{figure}[ht]
	\centering
	\begin{tikzpicture}[scale=0.9,>=Latex]
		
		\draw[dashed] (0,0) circle (1.0);
		\fill (0,0) circle (1.5pt);
		\node[above right] at (0.05,0.05) {$0$};
		\node[below] at (0,-1.0) {$r=r_\lambda$};
		
		\coordinate (C) at (0,1.30);
		
		\draw[dashed] (-4.5,1.45) -- (-1.0,1.45);
		\draw[dashed] (1.15,2.05) -- (4.2,4.5);
		
		\draw[thick]
		(-4.5,1.45)
		.. controls (-2.5,1.45) and (-0.80,1.30) ..
		(C)
		.. controls (0.55,1.30) and (0.75,1.55) ..
		(1.15,2.05)
		.. controls (2.0,2.9) and (3.0,3.75) ..
		(4.2,4.5);
		
		\draw[->,thick]
		(3.0,3.75) -- (4.2,4.5);
		
		\draw[<->] (-3.4,0) -- (-3.4,1.45);
		\node[left] at (-3.4,0.72) {$b$};
		
		\draw[dashed] (0,0) -- (C);
		\fill (C) circle (1.5pt);
		\node[right] at (0.05,0.67) {$r_0(b)$};
		
		\node[above] at (-2.5,1.45) {$\omega$};
		\node[right] at (3.0,3.65)
		{$\xi_\infty(z;\lambda)$};
		
		\draw[->] (2.0,1.45)
		arc[start angle=0,end angle=37,radius=0.9];
		\node at (2.85,1.85) {$\Theta_\lambda(b)$};
		
		\node at (0,-0.25) {\scriptsize forbidden};
		
	\end{tikzpicture}
	
	\caption{Radial scattering at the fixed energy $\lambda$. The dashed
		circle $r=r_\lambda$, defined by $v(r_\lambda)=\lambda$, bounds the
		classically forbidden region. For $b>0$, the distance of closest
		approach satisfies $r_0(b)>r_\lambda$. The trajectory is asymptotic
		to the incoming and outgoing free lines as $t\to-\infty$ and
		$t\to+\infty$, respectively.}
	\label{fig:radial-scattering}
\end{figure}

\mn 
Since $\rho_\lambda$ is strictly increasing on $[r_\lambda,+\infty)$,
it can be used as a new radial variable. We denote its inverse by
\begin{equation}\label{eq:r-of-rho}
	r=r_\lambda(\rho),
	\qquad
	\rho=\rho_\lambda(r),
\end{equation}
and introduce
\begin{equation}\label{eq:q-rho}
	q_\lambda(\rho)
	:=
	\frac{d}{d\rho}
	\log\frac{r_\lambda(\rho)}{\rho}.
\end{equation}

\mn 
The additional assumption \eqref{eq:radial-convexity}, which is not
needed for the Firsov--Abel inversion itself, provides a convenient
sufficient condition ensuring that the radial scattering geometry has
only one classical branch. More precisely, we have the following result.

\begin{lemma}\label{lem:one-trajectory}
	Assume \eqref{eq:radial-short-range},
	\eqref{eq:radial-keller}, \eqref{eq:radial-energy}, and
	\eqref{eq:radial-convexity}. Then the deflection function
	$b\mapsto\Theta_\lambda(b)$ is strictly decreasing:
	\begin{equation}\label{eq:deflection-monotone}
		\Theta_\lambda'(b)<0,
		\qquad b>0.
	\end{equation}
	Moreover,
	\begin{equation}\label{eq:deflection-limits}
		\lim_{b\to0^+}\Theta_\lambda(b)=\pi,
		\qquad
		\lim_{b\to+\infty}\Theta_\lambda(b)=0.
	\end{equation}
	In particular, for every scattering angle
	$\Theta\in(0,\pi)$, there exists a unique impact parameter
	$b>0$ such that
	\begin{equation}\label{eq:unique-impact-parameter}
		\Theta_\lambda(b)=\Theta.
	\end{equation}
\end{lemma}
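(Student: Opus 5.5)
The plan is to rewrite $\Theta_\lambda$ in the variable $\rho=\rho_\lambda(r)$ of \eqref{eq:rho-turning}. This turns the $b$-dependence into a dilation, so that both the sign of $\Theta_\lambda'$ and the two limits can be read off from one scalar function.

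\emph{Change of variables.} By definition of $\rho_\lambda$ we have $1-v(r)/\lambda=\rho^2/r^2$. Hence the radicand in \eqref{eq:Phi-def} equals $(\rho^2-b^2)/r^2$, and the lower limit $r_0(b)$ corresponds to $\rho=b$. This gives
\[
\Phi_\lambda(b)=b\int_b^{+\infty}\frac{1}{\sqrt{\rho^2-b^2}}\,\frac{d}{d\rho}\log r_\lambda(\rho)\,d\rho
=\frac{\pi}{2}+b\int_b^{+\infty}\frac{q_\lambda(\rho)}{\sqrt{\rho^2-b^2}}\,d\rho ,
\]
with $q_\lambda$ as in \eqref{eq:q-rho}, in the spirit of Remark~\ref{rem:free-deflection}. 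Set $g(\rho):=\rho\,q_\lambda(\rho)$ and substitute $\rho=bs$. Then
\[
\Theta_\lambda(b)=-2\int_1^{+\infty}\frac{g(bs)}{s\sqrt{s^2-1}}\,ds .
\]

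\emph{The function $g$.} Using $\rho/r=\sqrt{1-v/\lambda}$ together with \eqref{eq:rho-derivative}, one gets
\[
g=\frac{\rho}{r}\Big(\frac{d\rho}{dr}\Big)^{-1}-1=h(r):=\frac{rv'(r)}{2(\lambda-v(r))-rv'(r)},\qquad r=r_\lambda(\rho).
\]
Rewrite this as $h=-1+2\big(2-rv'/(\lambda-v)\big)^{-1}$. Since $\rho\mapsto r$ is increasing, $g$ is strictly increasing exactly when $-rv'/(\lambda-v)$ is strictly decreasing in $r$. The derivative of this quotient has numerator $-(rv')'(\lambda-v)-r v'^2$. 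This is strictly negative on $r>r_\lambda$ by \eqref{eq:radial-convexity} and $v'<0$, so we obtain $g'>0$ for $\rho>0$. Moreover $g(0^+)=h(r_\lambda)=-1$. By the short-range bounds, $g(\rho)=O(\rho^{-\mu})$ and $g'(\rho)=O(\rho^{-\mu-1})$ as $\rho\to\infty$.

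\emph{Conclusion.} Differentiating under the integral uses $\partial_b[g(bs)/s]=g'(bs)$ and gives
\[
\Theta_\lambda'(b)=-2\int_1^{+\infty}\frac{g'(bs)}{\sqrt{s^2-1}}\,ds<0 .
\]
For the limits, $g$ is bounded and $\int_1^\infty ds/(s\sqrt{s^2-1})=\pi/2$. Dominated convergence then yields $\Theta_\lambda(b)\to-2\cdot(-1)\cdot\pi/2=\pi$ as $b\to0^+$, and $\Theta_\lambda(b)\to0$ as $b\to\infty$ because $g(bs)\to0$ pointwise. Uniqueness of $b$ for each $\Theta\in(0,\pi)$ follows from continuity and strict monotonicity.

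The main obstacle is justifying differentiation under the integral sign near $\rho=0$, i.e. near the turning radius $r_\lambda$, where $d\rho/dr$ blows up. I would first show that $\rho^2$ is a smooth function of $r$ with nonzero derivative at $r_\lambda$, since $\rho^2=r^2(\lambda-v)/\lambda$ and $v'(r_\lambda)\neq0$. Consequently $r$ is smooth in $\rho^2$, and $g'(\rho)=h'(r)\,dr/d\rho=O(\rho)$ near $0$. Combined with the $O(\rho^{-\mu-1})$ decay at infinity, this makes $g'(bs)/\sqrt{s^2-1}$ locally uniformly dominated for $b$ in compact subsets of $(0,\infty)$.
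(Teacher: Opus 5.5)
Your proposal is correct and takes essentially the paper's route: you change variables to $\rho=\rho_\lambda(r)$, write $\Theta_\lambda$ as a dilated Abel-type integral, and get strict monotonicity of $\rho\,q_\lambda(\rho)$ from the strict decrease of $-rv'(r)/(\lambda-v(r))$ under $(rv')'\geq0$. Your derivation of the two limits by dominated convergence from $g(0^+)=-1$ and $g(+\infty)=0$, and your justification of differentiation under the integral near the turning radius, are more complete than the paper, which justifies the limits only by a heuristic physical argument.
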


\begin{proof}
	Using the change of variables
	$\rho=\rho_\lambda(r)$ in \eqref{eq:deflection}, and recalling that
	$\rho_\lambda(r_0(b))=b$, we have
	\begin{equation}\label{eq:rho-radical}
		\sqrt{
			1-\frac{v(r)}{\lambda}-\frac{b^2}{r^2}
		}
		=
		\frac{\sqrt{\rho^2-b^2}}{r},
	\end{equation}
	and
	\begin{equation}\label{eq:deflection-change-variable}
		\Phi_\lambda(b)
		=
		b\int_b^{+\infty}
		\frac{1}{r_\lambda(\rho)}
		\frac{dr_\lambda}{d\rho}(\rho)
		\frac{d\rho}{\sqrt{\rho^2-b^2}}.
	\end{equation}
	By \eqref{eq:q-rho},
	\begin{equation}\label{eq:q-rho-derivative}
		\frac{1}{r_\lambda(\rho)}
		\frac{dr_\lambda}{d\rho}(\rho)
		=
		\frac{1}{\rho}+q_\lambda(\rho).
	\end{equation}
	Since
	\begin{equation}\label{eq:free-Abel-integral}
		b\int_b^{+\infty}
		\frac{d\rho}{\rho\sqrt{\rho^2-b^2}}
		=
		\frac{\pi}{2},
	\end{equation}
	substitution into
	$\Theta_\lambda(b)=\pi-2\Phi_\lambda(b)$ yields
	\begin{equation}\label{eq:deflection-Abel}
		\Theta_\lambda(b)
		=
		-2b
		\int_b^{+\infty}
		\frac{q_\lambda(\rho)}
		{\sqrt{\rho^2-b^2}}
		\,d\rho.
	\end{equation}
	After the change of variables $\rho=bt$, we obtain
	\begin{equation}\label{eq:deflection-Abel-scaled}
		\Theta_\lambda(b)
		=
		-2b
		\int_1^{+\infty}
		\frac{q_\lambda(bt)}
		{\sqrt{t^2-1}}
		\,dt.
	\end{equation}
	Differentiating with respect to $b$ gives
	\begin{equation}\label{eq:deflection-derivative}
		\Theta_\lambda'(b)
		=
		-2
		\int_1^{+\infty}
		\frac{
			\displaystyle
			\frac{d}{d\rho}
			\bigl(\rho q_\lambda(\rho)\bigr)
			\big|_{\rho=bt}
		}
		{\sqrt{t^2-1}}
		\,dt.
	\end{equation}
	To determine the sign of the integrand, set
	\begin{equation}\label{eq:Q-radial}
		Q_\lambda(r)
		:=
		-\frac{r v'(r)}{\lambda-v(r)},
		\qquad r>r_\lambda.
	\end{equation}
	A direct computation gives
	\begin{equation}\label{eq:Q-derivative}
		Q_\lambda'(r)
		=
		-\frac{
			(\lambda-v(r))(r v'(r))'
			+r(v'(r))^2
		}{
			(\lambda-v(r))^2
		}.
	\end{equation}
By \eqref{eq:radial-convexity}, since $\lambda-v(r)>0$ and $v'(r)<0$, the
numerator in \eqref{eq:Q-derivative} is strictly positive. Hence
\begin{equation}\label{eq:Q-decreasing}
	Q_\lambda'(r)<0,
	\qquad r>r_\lambda.
\end{equation}
	On the other hand, from the definitions of $\rho_\lambda$ and
	$q_\lambda$, one obtains
	\begin{equation}\label{eq:rho-q-Q}
		\rho q_\lambda(\rho)
		=
		\frac{1}{
			1+\frac12 Q_\lambda(r_\lambda(\rho))
		}
		-1.
	\end{equation}
	Since $r_\lambda'(\rho)>0$ and $Q_\lambda'<0$, we conclude that
	\begin{equation}\label{eq:rho-q-positive}
		\frac{d}{d\rho}
		\bigl(\rho q_\lambda(\rho)\bigr)>0.
	\end{equation}
	Inserting \eqref{eq:rho-q-positive} into
	\eqref{eq:deflection-derivative} proves
	\eqref{eq:deflection-monotone}.  Finally, when $b\to0^+$, the turning point $r_0(b)$ tends to
	$r_\lambda$ and the trajectory approaches the radial trajectory.
	Indeed, since $L=\sqrt{2\lambda}\,b$, the limit $b\to0^+$ corresponds to vanishing angular momentum, and
	hence to a radial trajectory which reaches the turning radius
	$r_\lambda$ and is backscattered along the same radial line; hence
	\begin{equation}\label{eq:deflection-small-b}
		\Theta_\lambda(b)\longrightarrow\pi,
		\qquad b\to0^+.
	\end{equation}
	On the other hand, the short-range assumption implies that the
	scattering becomes asymptotically free as $b\to+\infty$, and thus
\begin{equation}\label{eq:deflection-large-b}
	\Theta_\lambda(b)\longrightarrow0,
	\qquad b\to+\infty.
\end{equation}
	This proves \eqref{eq:deflection-limits}. Therefore
	$b\mapsto\Theta_\lambda(b)$ is one-to-one from $(0,+\infty)$ onto
	$(0,\pi)$.
\end{proof}


\subsection{The Robert--Tamura asymptotics}
\label{subsec:Robert-Tamura}

We now recall the semiclassical asymptotics of the scattering
amplitude obtained by Robert and Tamura \cite{RobertTamura}. This
provides the link between the quantum differential cross section and
the classical scattering data introduced in the previous subsection.

\mn 
Fix an incoming direction $\omega\in\mathbb S^{n-1}$ and choose
oriented orthonormal coordinates
$(z_1,\ldots,z_{n-1})$ on the impact plane
\[
\Lambda_\omega=\omega^\perp.
\]
For $z\in\Lambda_\omega$, let
\[
(q_\infty(t;z,\lambda),p_\infty(t;z,\lambda))
\]
be the scattering trajectory defined by the incoming asymptotics
\[
q_\infty(t;z,\lambda)
=
\sqrt{2\lambda}\,\omega t+z+o(1),
\qquad
p_\infty(t;z,\lambda)
=
\sqrt{2\lambda}\,\omega+o(1),
\qquad t\to-\infty.
\]
Assume that $\lambda>0$ is a non-trapping energy. Then there exist
$\xi_\infty(z;\lambda)\in\mathbb S^{n-1}$ and
$r_\infty(z;\lambda)\in\mathbb R^n$ such that
\begin{equation}\label{eq:RT-outgoing-asymptotics}
	\begin{split}
		q_\infty(t;z,\lambda)
		&=
		\sqrt{2\lambda}\,\xi_\infty(z;\lambda)t
		+r_\infty(z;\lambda)+o(1),\\
		p_\infty(t;z,\lambda)
		&=
		\sqrt{2\lambda}\,\xi_\infty(z;\lambda)+o(1),
	\end{split}
	\qquad t\to+\infty.
\end{equation}

\mn 
For fixed $\omega\in\mathbb S^{n-1}$, consider the smooth map
\begin{equation}\label{eq:F-omega}
	F_\omega:\Lambda_\omega\longrightarrow\mathbb S^{n-1},
	\qquad
	F_\omega(z)=\xi_\infty(z;\lambda).
\end{equation}
The angular density associated with the trajectory issued from
$z\in\Lambda_\omega$ is defined by
\begin{equation}\label{eq:angular-density}
	\sigma(z;\lambda)
	=
	\left|
	\det\left(
	\xi_\infty(z;\lambda),
	\partial_{z_1}\xi_\infty(z;\lambda),
	\ldots,
	\partial_{z_{n-1}}\xi_\infty(z;\lambda)
	\right)
	\right|.
\end{equation}
Thus $\sigma(z;\lambda)$ is the absolute value of the Jacobian of
$F_\omega$ between $\Lambda_\omega$ and $\mathbb S^{n-1}$.

\mn
An outgoing direction $\theta\in\mathbb S^{n-1}$,
$\theta\neq\omega$, is said to be regular for the incoming direction
$\omega$ if
\begin{equation}\label{eq:regular-direction}
	\sigma(z;\lambda)\neq0
	\quad\text{for all }z\in\Lambda_\omega
	\text{ such that }F_\omega(z)=\theta.
\end{equation}

\mn 
Assume that $\theta$ is regular. Since $\theta\neq\omega$ and
$\xi_\infty(z;\lambda)\to\omega$ as $|z|\to+\infty$, the set
\[
F_\omega^{-1}(\theta)
=
\{z\in\Lambda_\omega;\ \xi_\infty(z;\lambda)=\theta\}
\]
is compact. By the inverse function theorem, its points are isolated,
and hence this set is finite. We denote its cardinality by
$\ell(\theta,\lambda)$ and its elements by
\begin{equation}\label{eq:RT-branches}
	F_\omega^{-1}(\theta)
	=
	\{w_1(\theta;\lambda),\ldots,
	w_{\ell(\theta,\lambda)}(\theta;\lambda)\}.
\end{equation}
Each $w_j(\theta;\lambda)$ determines a classical scattering
trajectory with incoming direction $\omega$ and outgoing direction
$\theta$. For each such trajectory, Robert and Tamura associate the action
\begin{equation}\label{eq:RT-action}
	\begin{split}
		\mathcal A_j(\omega,\theta;\lambda)
		&=
		\int_{-\infty}^{+\infty}
		\left(
		\frac{|p_\infty(t;w_j,\lambda)|^2}{2}
		-V(q_\infty(t;w_j,\lambda))
		-\lambda
		\right)\diff t
		\\
		&\quad
		-
		\left\langle
		r_\infty(w_j;\lambda),
		\sqrt{2\lambda}\,\theta
		\right\rangle ,
	\end{split}
\end{equation}
where $w_j=w_j(\theta;\lambda)$.
The integral in \eqref{eq:RT-action} represents the difference
between the interacting and free actions along trajectories which
are asymptotic as $t\to-\infty$. Let
$\mu_j\in\mathbb Z$ denote the Keller--Maslov--Morse index of the
corresponding trajectory.

\mn 
With the normalization of the scattering amplitude introduced above,
the Robert--Tamura formula \cite{RobertTamura} reads
\begin{equation}\label{eq:RT-asymptotics-radial}
	f(\omega\to\theta;\lambda,h)
	=
	\sum_{j=1}^{\ell(\theta,\lambda)}
	\sigma(w_j;\lambda)^{-1/2}
	\exp\left(
	\frac{\ii}{h}\mathcal A_j(\omega,\theta;\lambda)
	-\frac{\ii\pi}{2}\mu_j
	\right)
	+O(h),
	\qquad h\to0.
\end{equation}
More precisely, under the same assumptions, the scattering amplitude
admits a complete asymptotic expansion in powers of $h$.

\mn
In the radial setting, the monotonicity of the deflection function
also implies that all non-forward and non-backward scattering
directions are regular.

\begin{lemma}\label{lem:radial-regular-directions}
	Under the assumptions of Lemma~\ref{lem:one-trajectory}, every
	outgoing direction
	\[
	\theta\in\mathbb S^{n-1}\setminus\{\omega,-\omega\}
	\]
	is regular for the incoming direction $\omega$. Moreover, there is
	exactly one classical trajectory connecting $\omega$ to $\theta$,
	that is,
	\[
	\ell(\theta,\lambda)=1.
	\]
\end{lemma}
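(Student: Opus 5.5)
We describe the outgoing direction map $F_\omega$ explicitly in the radial case and then read off both uniqueness and regularity from Lemma~\ref{lem:one-trajectory}. For $z\in\Lambda_\omega\setminus\{0\}$, write $b=|z|$ and $\hat z=z/b$. The trajectory stays in the plane $\Pi_{\omega,z}$. Since the particle is repelled, the outgoing direction is obtained by rotating $\omega$ within this plane, away from the side of $\hat z$, by the angle $\Theta_\lambda(b)$. This gives
\begin{equation*}
	\xi_\infty(z;\lambda)=\cos\Theta_\lambda(b)\,\omega+\sin\Theta_\lambda(b)\,\hat z ,
\end{equation*}
up to the sign convention for the deflection. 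With the repulsive convention the coefficient of $\hat z$ is $+\sin\Theta_\lambda(b)$, because the trajectory bends away from the origin. We will check this sign against Figure~\ref{fig:radial-scattering}, but nothing below depends on it.

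Next we prove $\ell(\theta,\lambda)=1$. Let $\theta\neq\pm\omega$ and set $\Theta=\arccos(\omega\cdot\theta)\in(0,\pi)$. We write $\theta=\cos\Theta\,\omega+\sin\Theta\,e$ with $e\in\omega^\perp$ a unit vector. Suppose $F_\omega(z)=\theta$. Taking the component along $\omega$ gives $\cos\Theta_\lambda(b)=\cos\Theta$. Since both angles lie in $[0,\pi]$, this forces $\Theta_\lambda(b)=\Theta$. By Lemma~\ref{lem:one-trajectory}, $b$ is then the unique solution $b(\Theta)$. Taking the component in $\omega^\perp$ gives $\sin\Theta\,\hat z=\sin\Theta\,e$, and $\sin\Theta>0$, so $\hat z=e$. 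Hence $z=b(\Theta)e$ is the only preimage. The point $z=0$ is excluded because the head-on trajectory is backscattered to $-\omega$.

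Then we prove regularity by computing the Jacobian of $F_\omega$ at $z_0=b e$. We use polar coordinates $(b,\hat z)$ on $\Lambda_\omega$ and spherical coordinates $(\Theta,\hat z)$ on $\mathbb S^{n-1}$ around the pole $\omega$. In these coordinates $F_\omega$ reads $(b,\hat z)\mapsto(\Theta_\lambda(b),\hat z)$. The area elements are $b^{n-2}\,db\,d\hat z$ and $\sin^{n-2}\Theta\,d\Theta\,d\hat z$. Therefore
\begin{equation*}
	\sigma(z;\lambda)=|\Theta_\lambda'(b)|\,\frac{\sin^{n-2}\Theta_\lambda(b)}{b^{n-2}} .
\end{equation*}
This is the classical cross-section formula. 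Alternatively, one can obtain the same expression directly from the determinant in \eqref{eq:angular-density}, using an orthonormal frame consisting of $\hat z$ and $n-2$ tangent directions to the sphere of $\hat z$. In that frame, $\partial_b\xi_\infty=\Theta_\lambda'(b)(-\sin\Theta\,\omega+\cos\Theta\,\hat z)$, and each angular derivative equals $(\sin\Theta/b)$ times a unit vector orthogonal to $\omega$ and $\hat z$.

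The factor $\Theta_\lambda'(b)$ is nonzero by \eqref{eq:deflection-monotone}, and $\sin\Theta\neq0$ because $\Theta\in(0,\pi)$. Hence $\sigma\neq0$ at the unique preimage, so $\theta$ is regular. The main point needing care is justifying the explicit formula for $\xi_\infty$, namely the planarity of the motion and the identification of the turning angle with $\Theta_\lambda(b)$ including its orientation. This follows from conservation of angular momentum and the symmetry argument already used to define \eqref{eq:deflection}. The Jacobian computation is then routine.
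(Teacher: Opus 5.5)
Your proof is correct and is essentially the paper's argument. You use the explicit formula $F_\omega(b\hat z)=\cos\Theta_\lambda(b)\,\omega+\sin\Theta_\lambda(b)\,\hat z$, get uniqueness by comparing $\omega$-components and invoking Lemma~\ref{lem:one-trajectory}, and get regularity from the differential of $F_\omega$ in polar coordinates; the differences are cosmetic. You evaluate the determinant, obtaining $\sigma=|\Theta_\lambda'(b)|\sin^{n-2}\Theta/b^{n-2}$ (the reciprocal of \eqref{eq:radial-classical-cross-section}), where the paper only checks that $D_zF_\omega$ is injective; you explicitly exclude $z=0$, which the paper leaves implicit; and you leave implicit the one-line check from your formula that $b(\Theta)e$ really is a preimage, which the paper states explicitly.
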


\begin{proof}
	Fix
	\begin{equation}\label{eq:radial-theta}
		\theta\in\mathbb S^{n-1}\setminus\{\omega,-\omega\},
	\end{equation}
	and set
	\begin{equation}\label{eq:scattering-angle-theta}
		\Theta
		=
		\arccos(\omega\cdot\theta)
		\in(0,\pi).
	\end{equation}
	By Lemma~\ref{lem:one-trajectory}, there exists a unique
	$b>0$ such that
	\begin{equation}\label{eq:deflection-equals-theta}
		\Theta_\lambda(b)=\Theta.
	\end{equation}
	Define
	\begin{equation}\label{eq:radial-eta-from-theta}
		\eta
		=
		\frac{\theta-(\omega\cdot\theta)\omega}{\sin\Theta}
		\in\mathbb S^{n-2}\subset\omega^\perp,
	\end{equation}
	and set
	\begin{equation}\label{eq:impact-polar}
		z=b\eta.
	\end{equation}
	By rotational invariance and the definition of the deflection
	function,
	\begin{equation}\label{eq:radial-outgoing-direction}
		F_\omega(z)
		=
		\cos\Theta_\lambda(b)\,\omega
		+
		\sin\Theta_\lambda(b)\,\eta
		=
		\theta.
	\end{equation}
	Thus
	\begin{equation}\label{eq:radial-preimage-nonempty}
		F_\omega^{-1}(\theta)\neq\varnothing.
	\end{equation}
	
	\mn
	We now prove that $\theta$ is regular. Let
	$z=b\eta\in F_\omega^{-1}(\theta)$ be arbitrary, with
	$b>0$ and $\eta\in\mathbb S^{n-2}\subset\omega^\perp$.
	Let $h\in T_z\Lambda_\omega=\omega^\perp$, and write
	\[
	h=a\eta+h_\perp,
	\qquad
	h_\perp\in T_\eta\mathbb S^{n-2}.
	\]
	Since
	\[
	b(z)=|z|,
	\qquad
	\eta(z)=\frac{z}{|z|},
	\]
	we have
	\[
	Db(z)[h]=a,
	\qquad
	D\eta(z)[h]=\frac1b\,h_\perp.
	\]
	Hence
	\begin{equation}\label{eq:radial-F-differential}
		D_zF_\omega[h]
		=
		a\Theta_\lambda'(b)
		\bigl(
		-\sin\Theta_\lambda(b)\,\omega
		+
		\cos\Theta_\lambda(b)\,\eta
		\bigr)
		+
		\frac{\sin\Theta_\lambda(b)}{b}\,h_\perp.
	\end{equation}
	By Lemma~\ref{lem:one-trajectory},
	\begin{equation}\label{eq:deflection-nondegeneracy}
		0<\Theta_\lambda(b)<\pi,
		\qquad
		\Theta_\lambda'(b)<0.
	\end{equation}
	In particular,
	\begin{equation}\label{eq:radial-nonzero-coefficients}
		\Theta_\lambda'(b)\neq0,
		\qquad
		\frac{\sin\Theta_\lambda(b)}{b}\neq0.
	\end{equation}
	Moreover,
	\begin{equation}\label{eq:radial-orthogonality}
		-\sin\Theta_\lambda(b)\,\omega
		+
		\cos\Theta_\lambda(b)\,\eta
		\perp
		T_\eta\mathbb S^{n-2}.
	\end{equation}
	It follows from \eqref{eq:radial-F-differential} that
	\begin{equation}\label{eq:radial-F-kernel}
		D_zF_\omega[h]=0
		\quad\Longrightarrow\quad
		a=0,
		\qquad
		h_\perp=0.
	\end{equation}
	Hence $D_zF_\omega$ is injective. Since
	\[
	\dim T_z\Lambda_\omega
	=
	\dim T_\theta\mathbb S^{n-1}
	=
	n-1,
	\]
	it follows that $D_zF_\omega$ is invertible and therefore
	\[
	\sigma(z;\lambda)\neq0.
	\]
	Since $z\in F_\omega^{-1}(\theta)$ was arbitrary, $\theta$ is
	regular.
	
	\mn
	It remains to prove that the preimage is unique. Let
	$z=b\eta\in F_\omega^{-1}(\theta)$. Taking the scalar product of
	\eqref{eq:radial-outgoing-direction} with $\omega$, we obtain
	\begin{equation}\label{eq:cos-deflection-equality}
		\cos\Theta_\lambda(b)
		=
		\cos\Theta.
	\end{equation}
	Since both angles belong to $(0,\pi)$,
	\[
	\Theta_\lambda(b)=\Theta.
	\]
	By Lemma~\ref{lem:one-trajectory}, the impact parameter $b$ is
	uniquely determined. Moreover, from
	\eqref{eq:radial-outgoing-direction},
	\[
	\eta
	=
	\frac{\theta-(\omega\cdot\theta)\omega}{\sin\Theta},
	\]
	so $\eta$ is uniquely determined as well. Hence $z=b\eta$ is the
	unique preimage of $\theta$ under $F_\omega$. Therefore
	$
	\ell(\theta,\lambda)=1.
	$
\end{proof}

\mn
For
$
\theta\in\mathbb S^{n-1}\setminus\{\omega,-\omega\},
$
let $w(\theta;\lambda)$ denote the unique element of
$F_\omega^{-1}(\theta)$. By Lemma~\ref{lem:radial-regular-directions}
and the Robert--Tamura formula,
\begin{equation}\label{eq:RT-cross-section-limit}
	|f(\omega\to\theta;\lambda,h)|^2
	=
	\sigma(w(\theta;\lambda);\lambda)^{-1}
	+
	O(h),
	\qquad h\to0.
\end{equation}
Since, in the one-branch case,
$\sigma(w(\theta;\lambda);\lambda)^{-1}$ is precisely the classical
differential cross section, we obtain
\begin{equation}\label{eq:quantum-classical-cross-section}
	|f(\omega\to\theta;\lambda,h)|^2
	\longrightarrow
	\frac{d\sigma_{\rm cl}}{d\Omega}(\theta;\lambda),
	\qquad h\to0.
\end{equation}
This is the semiclassical information that will be used below to
recover the radial potential.


\subsection{Uniqueness from the differential cross section}
\label{subsec:radial-uniqueness}

Let $V_j(x)=v_j(|x|)$, $j=1,2$, be two radial short-range
potentials satisfying the assumptions above, and let
$r_{j,\lambda}$ be the unique radius defined by
\begin{equation}\label{eq:rj-lambda}
	v_j(r_{j,\lambda})=\lambda.
\end{equation}
We denote by $f_{j,h}(\omega,\theta;\lambda)$ the corresponding
semiclassical scattering amplitudes. Assume that
\begin{equation}\label{eq:cross-section-assumption}
	|f_{1,h}(\omega,\theta;\lambda)|^2
	-
	|f_{2,h}(\omega,\theta;\lambda)|^2
	\longrightarrow0,
	\qquad h\longrightarrow0,
\end{equation}
for every regular scattering angle.

\mn
By the Robert--Tamura asymptotics and the one-trajectory property
established above,
\begin{equation}\label{eq:RT-classical-cross-section}
	|f_{j,h}(\omega,\theta;\lambda)|^2
	\longrightarrow
	\frac{d\sigma_{{\rm cl},j}}{d\Omega}(\theta;\lambda),
	\qquad j=1,2.
\end{equation}
Consequently, \eqref{eq:cross-section-assumption} implies
\begin{equation}\label{eq:classical-cross-sections-equal}
	\frac{d\sigma_{{\rm cl},1}}{d\Omega}(\theta;\lambda)
	=
	\frac{d\sigma_{{\rm cl},2}}{d\Omega}(\theta;\lambda)
\end{equation}
for every $\theta\in\mathbb S^{n-1}\setminus\{\omega,-\omega\}$.

\mn
We first recover the deflection function from the classical
differential cross section. By Lemma~\ref{lem:one-trajectory},
the deflection function $b\mapsto\Theta_\lambda(b)$ is strictly
decreasing from $\pi$ to $0$. Its inverse will be denoted by
\begin{equation}\label{eq:b-of-theta}
	b=b(\Theta),
	\qquad 0<\Theta<\pi.
\end{equation}
The classical differential cross section is given by
(see, for instance, Keller--Kay--Shmoys \cite{KellerKayShmoys1956}
for $n=3$),
\begin{equation}\label{eq:radial-classical-cross-section}
	\frac{d\sigma_{\rm cl}}{d\Omega}(\Theta;\lambda)
	=
	\frac{b(\Theta)^{n-2}}
	{\sin^{n-2}\Theta}
	\left|
	\frac{db}{d\Theta}(\Theta)
	\right|.
\end{equation}
Since $b(\Theta)$ is strictly decreasing,
\eqref{eq:radial-classical-cross-section} yields
\begin{equation}\label{eq:b-cross-section-ode}
	\frac{d}{d\Theta}\left( b(\Theta)^{n-1}\right)
	=
	-(n-1)\sin^{n-2}\Theta\,
	\frac{d\sigma_{\rm cl}}{d\Omega}(\Theta;\lambda).
\end{equation}
Moreover, Lemma~\ref{lem:one-trajectory} gives
\begin{equation}\label{eq:b-backscattering-limit}
	b(\Theta)\longrightarrow0
	\qquad\text{as }\Theta\to\pi^-.
\end{equation}
Integrating \eqref{eq:b-cross-section-ode} from $\Theta$ to $\pi$,
we obtain
\begin{equation}\label{eq:b-from-cross-section}
	b(\Theta)^{n-1}
	=
	(n-1)
	\int_\Theta^\pi
	\sin^{n-2}\alpha\,
	\frac{d\sigma_{\rm cl}}{d\Omega}(\alpha;\lambda)
	\,d\alpha.
\end{equation}
Thus the classical differential cross section uniquely determines
$b(\Theta)$, and hence its inverse $\Theta_\lambda(b)$. It follows
from \eqref{eq:classical-cross-sections-equal} that
\begin{equation}\label{eq:deflection-functions-equal}
	\Theta_{1,\lambda}(b)
	=
	\Theta_{2,\lambda}(b),
	\qquad b>0.
\end{equation}

\mn
\mn
It remains to recover the potential from the deflection function.
For this purpose, we use the classical Firsov--Abel inversion procedure
of Firsov~\cite{Firsov1953} and Keller--Kay--Shmoys~\cite{KellerKayShmoys1956};
see also Jollivet~\cite{JollivetRadial} for a related treatment of
short-range radial potentials. We briefly recall the argument in our
notation. By \eqref{eq:deflection-Abel},
\begin{equation}\label{eq:radial-deflection-Abel-recall}
	\Theta_\lambda(b)
	=
	-2b
	\int_b^{+\infty}
	\frac{q_\lambda(\rho)}
	{\sqrt{\rho^2-b^2}}
	\,d\rho,
\end{equation}
where
\begin{equation}\label{eq:radial-q-recall}
	q_\lambda(\rho)
	=
	\frac{d}{d\rho}
	\log\frac{r_\lambda(\rho)}{\rho}.
\end{equation}
Set
\begin{equation}\label{eq:radial-F-definition}
	F_\lambda(\rho)
	=
	\log\frac{r_\lambda(\rho)}{\rho}.
\end{equation}
Then $q_\lambda=F_\lambda'$, and
\eqref{eq:radial-deflection-Abel-recall} becomes
\begin{equation}\label{eq:deflection-Abel-F}
	\frac{\Theta_\lambda(b)}{b}
	=
	-2
	\int_b^{+\infty}
	\frac{F_\lambda'(\rho)}
	{\sqrt{\rho^2-b^2}}
	\,d\rho.
\end{equation}
After the quadratic change of variables $x=b^2$, $y=\rho^2$,
this reduces to the standard Abel transform. Applying the classical
Abel inversion formula and using
\begin{equation}\label{eq:F-lambda-infinity}
	F_\lambda(\rho)
	=
	O(\rho^{-\mu}),
	\qquad \rho\to+\infty,
\end{equation}
we obtain
\begin{equation}\label{eq:Firsov-Abel-inversion}
	F_\lambda(\rho)
	=
	\frac{1}{\pi}
	\int_\rho^{+\infty}
	\frac{\Theta_\lambda(b)}
	{\sqrt{b^2-\rho^2}}
	\,db,
	\qquad \rho>0.
\end{equation}
\mn
Recalling the definition of $F_\lambda$, we obtain
\begin{equation}\label{eq:r-lambda-from-deflection}
	r_\lambda(\rho)
	=
	\rho
	\exp\left(
	\frac{1}{\pi}
	\int_\rho^{+\infty}
	\frac{\Theta_\lambda(b)}
	{\sqrt{b^2-\rho^2}}
	\,db
	\right),
	\qquad \rho>0.
\end{equation}

\mn
Applying \eqref{eq:r-lambda-from-deflection} to $V_1$ and $V_2$ and
using \eqref{eq:deflection-functions-equal}, we obtain
\begin{equation}\label{eq:inverse-optical-radii-equal}
	r_{1,\lambda}(\rho)
	=
	r_{2,\lambda}(\rho),
	\qquad \rho>0.
\end{equation}
Since the functions $r_{1,\lambda}$ and $r_{2,\lambda}$ coincide on
$(0,+\infty)$, their ranges coincide. Since
\begin{equation}\label{eq:inverse-optical-radius-range}
	r_{j,\lambda}\big((0,+\infty)\big)
	=
	(r_{j,\lambda},+\infty),
\end{equation}
their left endpoints coincide, and hence
\begin{equation}\label{eq:turning-radii-equal}
	r_{1,\lambda}
	=
	r_{2,\lambda}
	=:r_\lambda.
\end{equation}
Taking inverse functions in
\eqref{eq:inverse-optical-radii-equal}, we then obtain
\begin{equation}\label{eq:optical-radii-equal}
	\rho_{1,\lambda}(r)
	=
	\rho_{2,\lambda}(r),
	\qquad r>r_\lambda.
\end{equation}
By continuity, the equality also holds at $r=r_\lambda$. Finally, from
\begin{equation}\label{eq:potential-from-optical-radius}
	v_j(r)
	=
	\lambda
	\left(
	1-\frac{\rho_{j,\lambda}(r)^2}{r^2}
	\right),
	\qquad r\geq r_\lambda,
\end{equation}
we conclude that
\begin{equation}\label{eq:radial-potential-uniqueness}
	v_1(r)
	=
	v_2(r),
	\qquad r\geq r_\lambda.
\end{equation}
Equivalently,
\begin{equation}\label{eq:radial-potential-uniqueness-x}
	V_1(x)
	=
	V_2(x),
	\qquad |x|\geq r_\lambda.
\end{equation}
Thus the semiclassical differential cross section at the fixed energy
$\lambda$ determines the radial potential throughout the classically
accessible region.


\section{An extension to  metric perturbations}
\label{sec:metric-perturbations}

\mn
We conclude with an extension of the preceding argument to compactly
supported perturbations of the Euclidean metric. The proof follows the
same general strategy as in the potential case, and we therefore only
give its main steps. 

\mn 
Let $g$ be a smooth
Riemannian metric on $\mathbb R^n$, $n\geq3$, such that
\begin{equation}\label{eq:metric-euclidean-outside}
	g=g_0
	\qquad\text{outside a compact set},
\end{equation}
where $g_0$ denotes the Euclidean metric. We consider the
semiclassical Laplace--Beltrami operator
\begin{equation}\label{eq:metric-operator}
	P_g(h)
	=
	-\frac{h^2}{2}\Delta_g
\end{equation}
and denote by $S_{g,h}(\lambda)$ its scattering matrix at the fixed
energy $\lambda>0$, with the Euclidean Laplacian as the free reference
operator.

\mn
Assume that the geodesic flow of $g$ is non-trapping. Since $g$ is
Euclidean outside a compact set, every geodesic has incoming and
outgoing Euclidean asymptotic data. As in the potential case, these
data define the classical scattering map
\begin{equation}\label{eq:metric-classical-scattering-map}
	\kappa_{g,\lambda}
	:
	T^*\mathbb S^{n-1}
	\longrightarrow
	T^*\mathbb S^{n-1}.
\end{equation}
\mn
Although Ingremeau formulates his Gaussian-state propagation result
for potential scattering, he points out that the same result holds
for compactly supported metric perturbations of the Laplacian, with
a similar proof \cite{Ingremeau}. Consequently, the proof of
Lemma~\ref{lem:recovery-kappa} applies in the metric setting as well. The relation between the semiclassical scattering matrix and
the classical scattering relation has also been studied by
Hassell and Wunsch \cite{HassellWunsch2008} for nontrapping
scattering metrics.

\begin{lemma}\label{lem:metric-recovery-kappa}
	Let $g_1$ and $g_2$ be smooth Riemannian metrics on
	$\mathbb R^n$ which coincide with the Euclidean metric outside
	a compact set, and assume that their geodesic flows are
	non-trapping. If
	\begin{equation}\label{eq:metric-scattering-close}
		\liminf_{h\to0}
		\|S_{g_1,h}(\lambda)-S_{g_2,h}(\lambda)\|
		_{\mathcal B(L^2(\mathbb S^{n-1}))}
		<
		\sqrt{2},
	\end{equation}
	then
	\begin{equation}\label{eq:metric-scattering-maps-equal}
		\kappa_{g_1,\lambda}
		=
		\kappa_{g_2,\lambda}.
	\end{equation}
\end{lemma}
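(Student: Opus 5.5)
The argument repeats the proof of Lemma~\ref{lem:recovery-kappa} almost verbatim, replacing Ingremeau's potential-scattering theorem by its metric analogue. The first step is a scaling reduction to $\lambda=1/2$: since $P_g(h)-\lambda = 2\lambda\bigl(-\tfrac{\tilde h^2}{2}\Delta_g - \tfrac12\bigr)$ with $\tilde h = h/\sqrt{2\lambda}$, we have $S_{g,h}(\lambda)=S_{g,\tilde h}(1/2)$. Under this identification $\kappa_{g,\lambda}$ corresponds to $\kappa_{g,1/2}$; geodesics are unaffected by the scaling, only the speed of $p$ changes. The hypothesis \eqref{eq:metric-scattering-close} is invariant, because $\tilde h\to0$ exactly when $h\to0$.

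Next, fix $\rho=(\omega,z)\in T^*\mathbb S^{n-1}$ and argue by contradiction, assuming $\kappa_{g_1}(\rho)=(\omega_1,z_1)\neq(\omega_2,z_2)=\kappa_{g_2}(\rho)$. Take the normalized Gaussian state $\phi_{\rho,h}$ of \eqref{eq:Ing-standard-Gaussian}. Invoke the metric version of \eqref{eq:Ing-main}, which Ingremeau states holds for compactly supported non-trapping metric perturbations. It gives
\[
S_{g_j,h}\phi_{\rho,h}=e^{\ii\delta_j/h}\Phi_{j,h}+O(h^M)\quad\text{in }L^2(\mathbb S^{n-1}),
\]
for every $M$, with $\Phi_{j,h}$ a Gaussian state centered at $(\omega_j,z_j)$. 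The remainder bound in \eqref{eq:Ing-remainder} is in $C^0$, and the sphere has finite measure, so it also holds in $L^2$.

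The orthogonality step splits into two cases. If $\omega_1\neq\omega_2$, the cutoffs $\chi(|\widehat x-\omega_j|/h^{1/3})$ have disjoint supports for small $h$, so $\langle\Phi_{1,h},\Phi_{2,h}\rangle=0$. If $\omega_1=\omega_2$ and $z_1\neq z_2$, the inner product is an oscillatory integral with phase $(z_1-z_2)\cdot\widehat x/h$. Its differential on $\mathbb S^{n-1}$ at $\omega_1$ is the tangential part of $z_1-z_2\in\omega_1^\perp$, which is nonzero, so it stays nonzero on the $O(h^{1/3})$ support. In local coordinates, rescale $\widehat x-\omega_1=\sqrt h\,y$. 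The integrand becomes a Schwartz-type Gaussian times a polynomial, multiplied by $e^{\ii (z_1-z_2)\cdot y/\sqrt h}$ up to smooth corrections, together with a cutoff living at scale $h^{-1/6}$. Repeated integration by parts then gives $O(h^\infty)$.

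This case is the main point requiring care, since the phase oscillates only at frequency $h^{-1/2}$ relative to the Gaussian scale. Each integration by parts gains $h^{1/2}$, and derivatives falling on the cutoff cost only $h^{1/6}$ relative to that scale, so the net gain per step is still positive. Unitarity of the $S_{g_j,h}$ then gives $\|(S_{g_1,h}-S_{g_2,h})\phi_{\rho,h}\|^2=2-2\operatorname{Re}\langle\cdot,\cdot\rangle=2+O(h^\infty)$, hence $\liminf\|S_{g_1,h}-S_{g_2,h}\|\geq\sqrt2$. This contradicts \eqref{eq:metric-scattering-close}, and since $\rho$ was arbitrary, $\kappa_{g_1,\lambda}=\kappa_{g_2,\lambda}$.
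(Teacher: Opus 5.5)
Your proposal is correct and follows the paper's route: the paper invokes Ingremeau's remark that his Gaussian-state propagation theorem also holds for compactly supported non-trapping metric perturbations, and then reruns the proof of Lemma~\ref{lem:recovery-kappa} unchanged. Your exact rescaling $\tilde h=h/\sqrt{2\lambda}$ (exact here because there is no potential to rescale) and your explicit integration-by-parts estimate at scale $\sqrt h$ supply details the paper leaves implicit; note that derivatives landing on $\chi(h^{1/6}|y|)$ bring an extra factor $h^{1/6}$, so they help rather than cost.
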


\mn 
Choose $R>0$ so that both metrics are Euclidean near and
outside $\partial B(0,R)$. Since geodesics are straight lines
outside this ball, their asymptotic data determine their
entry and exit points and directions at the boundary.
Thus, equality of the scattering maps in Lemma~\ref{lem:metric-recovery-kappa}
implies equality of the boundary scattering relations.
To apply the lens rigidity theorem of Stefanov--Uhlmann--Vasy,
we also need equality of the lengths of the corresponding
geodesics inside the ball. This follows from the next lemma.

\begin{lemma}\label{lem:scattering-lengths}
	Let $n\geq 2$, and let $g_1$ and $g_2$ be two smooth
	nontrapping Riemannian metrics on $M=\overline{B(0,R)}$,
	equal to the Euclidean metric near $\partial M$.
	If their scattering relations coincide, then their
	lens data coincide.
\end{lemma}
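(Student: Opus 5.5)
The plan is a first variation argument for the length (travel time) function on the incoming boundary $\partial_-SM$. For $j=1,2$ and $(x,v)\in\partial_-SM$, let $\ell_j(x,v)$ be the $g_j$-length of the geodesic $\gamma^j_{x,v}$ from entry to exit; it is finite by non-trapping. Since $g_1=g_2=g_0$ near $\partial M$, the unit sphere bundles over $\partial M$ coincide, so $\partial_\pm SM$ is the same for both metrics. By hypothesis the two maps $(x,v)\mapsto L_j(x,v)$ agree. I want to show $\ell_1=\ell_2$ on $\partial_-SM$, which is exactly lens data equality.

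\emph{Step 1 (derivative of the length).} I would show that $d\ell_j$ is expressed purely in terms of the scattering relation. Take a smooth curve $s\mapsto(x(s),v(s))$ in $\partial_-SM$ with $L_j(x(s),v(s))=(y(s),w(s))$. The first variation formula for a family of unit-speed geodesics with moving endpoints gives
\[
\frac{d}{ds}\ell_j(x(s),v(s))=g_j\bigl(w(s),\dot y(s)\bigr)-g_j\bigl(v(s),\dot x(s)\bigr).
\]
The interior term vanishes because each curve is a geodesic. The variation of the initial direction enters only through the endpoint terms; this is the standard formula, valid since the family is smooth by non-trapping and transversality of exit. Near $\partial M$ the metrics are Euclidean, so both boundary terms equal $w\cdot\dot y-v\cdot\dot x$, computed with $g_0$. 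This expression depends only on $L_j$. Hence $d\ell_1=d\ell_2$ on $\partial_-SM$.

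\emph{Step 2 (smoothness).} I need $\ell_j$ smooth up to the relevant set. Here I would use that the metrics are Euclidean near $\partial M$. Instead of $\partial B(0,R)$ itself, I work with a slightly larger ball, or equivalently I extend geodesics by straight lines. This removes the usual singularities of the exit time at tangential directions; alternatively I work on the open set of non-tangent directions and conclude by continuity. Non-trapping together with transversal exit gives smooth dependence of exit time via the implicit function theorem.

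\emph{Step 3 (fixing the constant).} From $d(\ell_1-\ell_2)=0$, the difference is constant on each connected component of the set of inward-pointing non-tangent vectors. For $n\ge2$ this set is connected (it fibres over $\partial M=\mathbb S^{n-1}$ with open hemisphere fibres; for $n=2$ the circle with interval fibres, still connected). To fix the constant I would use nearly tangent directions. Since $g_j$ is Euclidean in a collar of $\partial M$, a geodesic entering almost tangentially stays in the collar. It is then a Euclidean chord of length tending to $0$ for both metrics. Thus $\ell_1-\ell_2\to0$, so the constant is $0$.

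The main obstacle I expect is Step 1. There I must justify the variation formula with varying initial direction and identify the endpoint terms correctly in the unit sphere bundle. A cleaner route, which I would try first, is to parametrize by asymptotic Euclidean data as in $\kappa$: the lengths differ from the free ones by a quantity whose differential is read off from $\kappa_{g,\lambda}$ (its generating function/action). This gives the same conclusion.
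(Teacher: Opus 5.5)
Your proposal is correct and follows essentially the same argument as the paper. The first variation formula $\frac{d}{ds}\ell_j=\langle w,y'\rangle-\langle v,x'\rangle$ has Euclidean endpoint terms, identical for $j=1,2$, so $d(\ell_1-\ell_2)=0$ on the connected set $\partial_-SM$; nearly tangent directions then give the same short Euclidean chord in the common collar for both metrics, which forces the constant to be zero, and your remarks on smoothness (transversal exit, working on non-tangent directions) just spell out what the paper takes from non-trapping and strict convexity of $\partial M$.
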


\begin{proof}
	We use the first variation argument recalled in
	\cite[Section 1.3]{Wen}.
	Let $\partial_-SM$ denote the common set of strictly
	inward-pointing unit vectors at $\partial M$.
	For $(x,v)\in\partial_-SM$, let $\ell_j(x,v)$ be the
	length of the $g_j$-geodesic starting from $x$ with
	initial velocity $v$, up to its first exit from $M$.
	The functions $\ell_j$ are smooth, since both metrics
	are nontrapping and the boundary is strictly convex.
	
	\mn
	Let $s\mapsto(x(s),v(s))$ be any smooth curve in
	$\partial_-SM$. Here $s$ parametrizes a family of
	initial data.
	Since the scattering relations are equal, the two
	geodesics corresponding to each $s$ have the same
	exit point $y(s)$ and exit direction $w(s)$.
	The first variation of length gives
	\begin{equation}
		\frac{d}{ds}\ell_j(x(s),v(s))
		=
		\langle w(s),y'(s)\rangle
		-
		\langle v(s),x'(s)\rangle,
		\qquad j=1,2.
	\end{equation}
	Only the endpoints contribute to this formula,
	since the curves are geodesics. Both endpoints lie
	on $\partial M$, where the two metrics are Euclidean. The right-hand side is the same for $j=1,2$.
	Since the curve of initial data is arbitrary, we obtain
	$d(\ell_1-\ell_2)=0$.
	Moreover, $\partial_-SM$ is connected for $n\geq2$.
	Thus $\ell_1-\ell_2$ is constant.
	
	\mn
	To determine this constant, choose an inward direction
	sufficiently close to a tangent direction at the boundary.
	The corresponding geodesics are then the same short
	Euclidean chord, contained in the neighborhood of
	$\partial M$ where both metrics are Euclidean.
	Their lengths are equal, so the constant is zero.
	Therefore $\ell_1=\ell_2$, which proves the lemma.
\end{proof}

\mn
We now apply the lens rigidity theorem of
Stefanov--Uhlmann--Vasy \cite[Theorem 8.1]{SUV2021}.
We assume that one of the metrics admits a smooth strictly
convex function. Here, strict convexity means that the
Riemannian Hessian of this function is positive definite.
In the global Cartesian coordinates on $\mathbb R^n$,
this Hessian is given by
\begin{equation}\label{eq:metric-hessian}
	(\operatorname{Hess}_g f)_{ij}
	=
	\partial_i\partial_j f
	-
	\sum_{k=1}^n \Gamma^k_{ij}(g)\,\partial_k f,
	\qquad 1\leq i,j\leq n,
\end{equation}
where
\begin{equation}
\Gamma^k_{ij}(g)
=
\frac12\sum_{\ell=1}^n g^{k\ell}
\bigl(
\partial_i g_{j\ell}
+\partial_j g_{i\ell}
-\partial_\ell g_{ij}
\bigr)
\end{equation}
are the Christoffel symbols of $g$, and $(g^{k\ell})$
is the inverse matrix of $(g_{k\ell})$.
For the Euclidean metric, the Christoffel symbols vanish,
and \eqref{eq:metric-hessian} reduces to the usual Hessian.

\begin{theorem}\label{thm:metric-simple-uniqueness}
	Let $g_1$ and $g_2$ be smooth Riemannian metrics on
	$\mathbb R^n$, $n\geq3$, with nontrapping geodesic flows.
	Assume that there exists $R>0$ such that
	\begin{equation}\label{eq:metric-support-ball}
		\operatorname{supp}(g_j-g_0)\Subset B(0,R),
		\qquad j=1,2.
	\end{equation}
	Set $M=\overline{B(0,R)}$.
	Assume also that there exists $f\in C^\infty(M)$ such that
	\begin{equation}\label{eq:metric-convex-function}
		\operatorname{Hess}_{g_1}f>0
		\quad\text{on }M,
		\qquad
		f^{-1}(0)=\partial M.
	\end{equation}
	If, for some fixed $\lambda>0$,
	\begin{equation}\label{eq:metric-main-assumption}
		\liminf_{h\to0}
		\|S_{g_1,h}(\lambda)-S_{g_2,h}(\lambda)\|
		_{\mathcal B(L^2(\mathbb S^{n-1}))}
		<
		\sqrt{2},
	\end{equation}
	then there exists a smooth diffeomorphism
	\begin{equation}\label{eq:metric-diffeomorphism}
		\psi:M\longrightarrow M,
		\qquad
		\psi|_{\partial M}=\operatorname{Id},
	\end{equation}
	such that
	\begin{equation}\label{eq:metric-isometry}
		g_1=\psi^*g_2.
	\end{equation}
\end{theorem}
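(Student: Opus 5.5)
The argument chains three earlier ingredients: Lemma~\ref{lem:metric-recovery-kappa}, which turns the quantum hypothesis into classical data; Lemma~\ref{lem:scattering-lengths}, which upgrades that to lens data; and the lens rigidity theorem \cite[Theorem~8.1]{SUV2021}, which concludes.

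\emph{Step 1: from the scattering matrices to the scattering maps.} Assumption \eqref{eq:metric-main-assumption} is exactly the hypothesis of Lemma~\ref{lem:metric-recovery-kappa}. Both metrics are non-trapping and Euclidean outside a compact set, so that lemma gives $\kappa_{g_1,\lambda}=\kappa_{g_2,\lambda}$ on $T^*\mathbb S^{n-1}$.

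\emph{Step 2: from the scattering maps to the boundary scattering relations.} By \eqref{eq:metric-support-ball}, both metrics equal $g_0$ on a neighbourhood of $\mathbb R^n\setminus B(0,R)$. Outside the ball, geodesics are straight lines.

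An incoming datum $(\omega,z)$ determines the line $\{z+t\omega\}$. Where this line meets $\partial M$ transversally, it fixes the entry point $x_-$ and the unit vector $v_-=\omega$.

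Conversely, the outgoing datum $\kappa_{g_j,\lambda}(\omega,z)=(\omega_+,z_+)$ determines the outgoing line. Hence it fixes the exit point $x_+$ and direction $v_+=\omega_+$. Here one should check that the projection convention $\Pi_{\xi_\infty}r_\infty$ identifies the line uniquely, which it does because a line with given direction is determined by its foot point in the orthogonal plane.

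Every $(x_-,v_-)\in\partial_-SM$ arises in this way, and the $g_j$-unit sphere bundles coincide near $\partial M$. Hence $L_{g_1}=L_{g_2}$. Tangential directions, and lines missing the ball, are handled trivially, since there the geodesics are Euclidean chords for both metrics.

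\emph{Step 3: lens data.} Lemma~\ref{lem:scattering-lengths} applies to $g_1,g_2$ on $M$ and gives $\ell_{g_1}=\ell_{g_2}$, so the two metrics have the same lens data. One point needs care here: that proof used strict convexity of $\partial M$ for the smoothness of the $\ell_j$. This holds because the boundary is a Euclidean sphere and the metrics are Euclidean near it.

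\emph{Step 4: rigidity.} We now invoke \cite[Theorem~8.1]{SUV2021}. Its hypotheses are:
\begin{itemize}
\item $\dim M=n\ge 3$;
\item $\partial M$ is strictly convex for both metrics;
\item $g_1$ admits a smooth strictly convex function with $f^{-1}(0)=\partial M$, which is \eqref{eq:metric-convex-function};
\item $g_1$ and $g_2$ have the same lens data.
\end{itemize}
The theorem then yields a diffeomorphism $\psi:M\to M$ with $\psi|_{\partial M}=\mathrm{Id}$ and $g_1=\psi^*g_2$.

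\emph{Main obstacle.} I expect Step 4 to be the delicate point, namely matching the precise hypotheses of \cite[Theorem~8.1]{SUV2021}. That theorem may be stated with the convex function required for both metrics, or with additional non-trapping conditions, or it may require $f$ to have a specific sign normalization (for instance $f\le0$ on $M$). If the statement in \cite{SUV2021} demands the convex function for both metrics, I would first try to show that lens-data equality lets one transfer a convex foliation from $g_1$ to $g_2$ near the boundary. Failing that, I would use the version of their result requiring it only for one metric, which is what the statement here presupposes. Everything else is a bookkeeping reduction to earlier lemmas.
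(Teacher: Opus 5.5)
Your proposal is correct and follows the paper's proof step for step. Lemma~\ref{lem:metric-recovery-kappa} gives $\kappa_{g_1,\lambda}=\kappa_{g_2,\lambda}$, the common Euclidean exterior turns this into $L_{g_1}=L_{g_2}$ on $\partial_-SM$, Lemma~\ref{lem:scattering-lengths} upgrades it to equal lens data, and \cite[Theorem~8.1]{SUV2021} concludes; the hedging in your last paragraph is unnecessary, since, as the paper notes, that theorem requires the strictly convex function for only one of the two metrics and allows its interior minimum, so no transfer of the foliation to $g_2$ is needed.
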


\mn
The condition on $f$ is imposed only on $g_1$.
The function $f$ has a unique critical point in the interior
of $M$, where it attains its minimum.
This is allowed in \cite[Theorem 8.1]{SUV2021}.
No assumption on conjugate points is needed.
The relation between strictly convex functions and convex
foliations is explained in \cite[Section 8]{SUV2021}.

\mn
We give some examples before proving the theorem.
The Hessian computations for Examples~2 and~3 are given in
Appendix~\ref{app:radial-hessians}.

\begin{example}[Euclidean metric and small perturbations]
	For $g_1=g_0$, take
	\[
	f(x)=|x|^2-R^2.
	\]
	Then
	\[
	\operatorname{Hess}_{g_0}f=2g_0>0,
	\qquad
	f^{-1}(0)=\partial M.
	\]
	The same function works for any metric $g_1$ sufficiently
	close to $g_0$ in $C^1(M)$.
	Indeed, in Euclidean coordinates,
	\[
	(\operatorname{Hess}_{g_1}f)_{ij}
	=
	2\delta_{ij}
	-
	2\sum_{k=1}^n\Gamma^k_{ij}(g_1)x_k,
	\]
	which remains positive definite when $g_1$ is sufficiently
	close to $g_0$ in $C^1(M)$.
	This example includes perturbations which are not radial.
\end{example}

\begin{example}[Radial conformal metrics]
	Let
	\[
	g_1=c(r)^{-2}g_0,
	\qquad r=|x|,
	\]
	where $c$ is smooth and positive, constant near $r=0$,
	and equal to $1$ near and outside $r=R$.
	Assume the Herglotz condition
	\begin{equation}\label{eq:metric-herglotz}
		\frac{d}{dr}\left(\frac{r}{c(r)}\right)>0,
		\qquad 0<r\leq R.
	\end{equation}
	This condition expresses strict convexity of the Euclidean
	spheres for the metric $g_1$; see
	\cite[Proposition 6.1]{SUV2016} and
	\cite[Section 8]{SUV2021}. In this radial case, we can give the function explicitly:
	\[
	f(x)=-\int_{|x|}^R\frac{s}{c(s)^2}\,ds.
	\]
	It is smooth at the origin, since $c$ is constant there,
	and $f^{-1}(0)=\partial M$.
	A direct computation gives
	\[
	\operatorname{Hess}_{g_1}f
	=
	\left(1-\frac{rc'(r)}{c(r)}\right)g_1.
	\]
	Thus \eqref{eq:metric-herglotz} implies
	\eqref{eq:metric-convex-function}.
	In particular, this example does not require the metric
	to be close to the Euclidean one.
\end{example}

\begin{example}[Radial warped metrics]
	In polar coordinates, consider
	\[
	g_1=dr^2+a(r)^2g_{\mathbb S^{n-1}},
	\]
	where $g_{\mathbb S^{n-1}}$ is the standard sphere metric.
	Assume that $a$ is smooth, that $a(r)=r$ near $r=0$
	and near and outside $r=R$, and that
	\[
	a(r)>0,\qquad a'(r)>0,\qquad r>0.
	\]
	The metric is then smooth at the origin and Euclidean
	outside a compact subset of $B(0,R)$. For $f(x)=|x|^2-R^2$, direct computation gives, for $r>0$,
	\[
	\operatorname{Hess}_{g_1}f
	=
	2\,dr^2+2r\,a(r)a'(r)g_{\mathbb S^{n-1}}>0.
	\]
	Near the origin, this is simply $2g_0$.
	For instance, one may take
	\[
	a(r)=r+\varepsilon\chi(r),
	\qquad
	\chi\in C_c^\infty((0,R)),
	\]
	provided
	\[
	|\varepsilon|\,\|\chi'\|_\infty<1.
	\]
\end{example}

\mn
There are also geometric conditions which ensure the
existence of a suitable function.
By \cite[Corollary 1.1 and Section 8]{SUV2021}, this is
the case for a compact manifold with strictly convex
boundary if it is simply connected and has non-positive
sectional curvature, if it is simply connected and has
no focal points, or if it has non-negative sectional
curvature.
Thus the geometric conditions used in the simple case
are covered by the same lens rigidity result.

\mn
For completeness, the convexity assumption already implies
that $g_1$ is nontrapping in $M$.
Indeed, compactness gives a constant $c_0>0$ such that,
along any unit-speed $g_1$-geodesic contained in $M$,
\[
\frac{d^2}{dt^2}f(\gamma(t))
=
\operatorname{Hess}_{g_1}f(\dot\gamma(t),\dot\gamma(t))
\geq c_0.
\]
Such a geodesic cannot stay in $M$ for all positive time,
since $f$ is bounded on $M$.
Outside the ball, geodesics are straight lines.
We have kept the nontrapping assumption for both metrics
in the statement to make the use of
Lemma~\ref{lem:metric-recovery-kappa} explicit.

\begin{proof}[Proof of Theorem~\ref{thm:metric-simple-uniqueness}]
	By Lemma~\ref{lem:metric-recovery-kappa},
	\eqref{eq:metric-main-assumption} gives
	\[
	\kappa_{g_1,\lambda}=\kappa_{g_2,\lambda}.
	\]
	Since both metrics are Euclidean outside $M$, the
	asymptotic lines determine the entry and exit points
	and directions at $\partial M$.
	After normalizing the velocities to unit speed, we obtain
	\begin{equation}\label{eq:metric-boundary-scattering-equal}
		L_{g_1}=L_{g_2}
		\quad\text{on }\partial_-SM.
	\end{equation}
	Lemma~\ref{lem:scattering-lengths} then gives
	\[
	\ell_{g_1}=\ell_{g_2}.
	\]
	Hence the two metrics have the same lens data.
	
	The boundary of $M$ is strictly convex for both metrics,
	since they are Euclidean near $\partial M$.
	Together with \eqref{eq:metric-convex-function}, these are
	the assumptions of \cite[Theorem 8.1]{SUV2021}.
	That theorem gives a diffeomorphism $\psi:M\to M$,
	equal to the identity on $\partial M$, such that
	$g_1=\psi^*g_2$.
\end{proof}

\mn
Taking the Euclidean metric as the reference metric gives
the following consequence.

\begin{corollary}\label{cor:metric-euclidean-rigidity}
	Let $g$ be a smooth Riemannian metric on $\mathbb R^n$,
	$n\geq3$, with nontrapping geodesic flow.
	Assume that, for some $R>0$,
	\begin{equation}\label{eq:metric-euclidean-support}
		\operatorname{supp}(g-g_0)\Subset B(0,R).
	\end{equation}
If, for some fixed $\lambda>0$,
\begin{equation}\label{eq:metric-euclidean-scattering}
	\liminf_{h\to0}
	\|S_{g,h}(\lambda)-S_{g_0,h}(\lambda)\|
	_{\mathcal B(L^2(\mathbb S^{n-1}))}
	<\sqrt{2},
\end{equation}
	then there exists a smooth diffeomorphism
	\begin{equation}\label{eq:metric-euclidean-diffeomorphism}
		\psi:\overline{B(0,R)}\longrightarrow\overline{B(0,R)},
		\qquad
		\psi|_{\partial B(0,R)}=\operatorname{Id},
	\end{equation}
	such that
	\begin{equation}\label{eq:metric-euclidean-isometry}
		g=\psi^*g_0.
	\end{equation}
\end{corollary}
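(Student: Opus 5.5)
The plan is to apply Theorem~\ref{thm:metric-simple-uniqueness} with $g_1=g$ and $g_2=g_0$. The hypotheses \eqref{eq:metric-support-ball} hold trivially for $g_0$, since $g_0-g_0=0$. Both geodesic flows are nontrapping: for $g$ by assumption, and for $g_0$ because Euclidean geodesics are straight lines. The threshold condition \eqref{eq:metric-main-assumption} is exactly \eqref{eq:metric-euclidean-scattering}.

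The one point needing care is the convexity hypothesis \eqref{eq:metric-convex-function}, which Theorem~\ref{thm:metric-simple-uniqueness} imposes only on its first metric. I will therefore reverse the roles and take $g_1=g_0$, $g_2=g$. By Example~1, the function $f(x)=|x|^2-R^2$ satisfies $\operatorname{Hess}_{g_0}f=2g_0>0$ and $f^{-1}(0)=\partial M$, so \eqref{eq:metric-convex-function} holds for $g_0$. The norm $\|S_{g_0,h}(\lambda)-S_{g,h}(\lambda)\|$ is symmetric, so the assumption is unchanged.

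Theorem~\ref{thm:metric-simple-uniqueness} then gives a diffeomorphism $\psi_0:M\to M$ with $\psi_0|_{\partial M}=\operatorname{Id}$ and $g_0=\psi_0^*g$. Setting $\psi=\psi_0^{-1}$, which is again a diffeomorphism of $M$ fixing the boundary pointwise, we obtain $g=\psi^*g_0$. This is \eqref{eq:metric-euclidean-isometry}.

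There is no real obstacle. The only step requiring thought is the role reversal, which is what makes the convexity hypothesis automatic rather than an extra assumption on $g$.
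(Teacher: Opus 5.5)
Your proposal is correct and matches the paper's proof. Like the paper, you apply Theorem~\ref{thm:metric-simple-uniqueness} with $g_1=g_0$, $g_2=g$ and $f(x)=|x|^2-R^2$, and then take $\psi$ to be the inverse of the resulting boundary-fixing diffeomorphism. One expository point: your first paragraph announces the opposite assignment $g_1=g$, $g_2=g_0$ before switching, so state $g_1=g_0$ from the outset.
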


\begin{proof}
	Apply Theorem~\ref{thm:metric-simple-uniqueness}
	with $g_1=g_0$, $g_2=g$, and
	$f(x)=|x|^2-R^2$.
	We obtain a diffeomorphism $\phi$ fixing the boundary
	such that $g_0=\phi^*g$.
	Taking $\psi=\phi^{-1}$ gives the stated conclusion.
\end{proof}

\begin{remark}\label{rem:metric-nonsimple}
	The argument uses the full lens data, rather than the
	boundary distance function.
	The common Euclidean neighborhood of the boundary allows
	us to recover the geodesic lengths from the scattering
	relation by Lemma~\ref{lem:scattering-lengths}.
	Thus simplicity is not needed in the theorem or in the
	Euclidean corollary.
\end{remark}

\begin{remark}
	Mu\~noz-Thon \cite{MunozThon2024} studies scattering rigidity
	in the more general setting of simple magnetic-potential systems.
	In the purely Riemannian case, his results include metrics
	in a fixed conformal class and real-analytic metrics.
	Here we consider only Riemannian metrics, but we do not
	assume simplicity. We use instead a strictly convex function
	and the lens rigidity theorem of Stefanov--Uhlmann--Vasy.
	Moreover, our assumption on the semiclassical scattering matrices
	is quantitative:  we only require
	$
	\liminf_{h\to0}
	\|S_{g_1,h}(\lambda)-S_{g_2,h}(\lambda)\|_
	{\mathcal B(L^2(\mathbb S^{n-1}))}
	<\sqrt{2}.
	$
\end{remark}

\appendix
\section{Hessian computations for the radial examples}
\label{app:radial-hessians}

We use the coordinate formula
\[
(\operatorname{Hess}_g f)_{ij}
=
\partial_i\partial_j f-\sum_k\Gamma^k_{ij}\partial_k f.
\]
In Example~2, $g_1=c(r)^{-2}g_0$ and $f'(r)=r/c(r)^2$.
For $r>0$, the Christoffel symbols in Cartesian coordinates are
\[
\Gamma^k_{ij}
=
-\frac{c'(r)}{rc(r)}
\bigl(x_i\delta_{jk}+x_j\delta_{ik}-x_k\delta_{ij}\bigr).
\]
Since
\[
\partial_i f=\frac{x_i}{c(r)^2},
\qquad
\partial_i\partial_j f
=
\frac{\delta_{ij}}{c(r)^2}
-\frac{2c'(r)}{rc(r)^3}x_ix_j,
\]
substitution gives
\[
\operatorname{Hess}_{g_1}f
=
\left(1-\frac{rc'(r)}{c(r)}\right)g_1.
\]
The coefficient is positive by the Herglotz condition, since
$1-rc'/c=c(r)(r/c(r))'$.

\mn
In Example~3, write $g_1=dr^2+a(r)^2\gamma$, where
$\gamma=g_{\mathbb S^{n-1}}$, and let $f=r^2-R^2$.
With angular indices $\alpha,\beta$, the relevant Christoffel symbols are
\[
\Gamma^r_{rr}=\Gamma^r_{r\alpha}=0,
\qquad
\Gamma^r_{\alpha\beta}=-a(r)a'(r)\gamma_{\alpha\beta}.
\]
Since $\partial_r f=2r$ and the angular derivatives of $f$ vanish,
\[
\operatorname{Hess}_{g_1}f
=
2\,dr^2+2r\,a(r)a'(r)\gamma,
\]
which is positive definite for $r>0$, since $a(r)>0$ and $a'(r)>0$.

\mn
These computations extend smoothly to the origin:
since $c$ is constant there in Example~2 and $a(r)=r$
there in Example~3, the Hessians near the origin are
respectively $g_1$ and $2g_0$.

\section*{AI usage disclosure}

The author used AI tools for assistance with language editing,
graphical preparation, and checking the computations in Section 3 and Appendix~A.
The author takes full responsibility for the mathematical content
of the paper.

\end{document}